\newtheorem{theorem}{Theorem}[section]
\newtheorem{lem}[theorem]{Lemma}
\numberwithin{equation}{section}
\renewcommand{\leq}{\leqslant}
\renewcommand{\geq}{\geqslant}
\newcommand{\up}{\uparrow}
\renewcommand{\pmod}[1]{\allowbreak\mkern7mu({\operator@font mod}\,\,#1)}
\renewcommand{\leq}{\leqslant}
\renewcommand{\geq}{\geqslant}
\renewcommand{\leq}{\leqslant}
\renewcommand{\geq}{\geqslant}
\numberwithin{equation}{section}
\begin{document} 
	
	\title{On The Tree Structure of Natural Numbers, II}
	\author{ Vitalii V. Iudelevich}
	\address{Moscow State University, Leninskie Gory str., 1, Moscow, Russia, 119991}
	\email{vitaliiyudelevich@mail.ru}

	\begin{abstract}
		Each natural number can be associated with some tree graph. Namely, 
		a natural number $n$ can be factorized as 
		$$ n = p_1^{\alpha_1}\ldots p_k^{\alpha_k},$$
		where
		$p_i$ are distinct prime numbers. Since
		$\alpha_i$ are naturals, they can be factorized in such a manner as well.
		This process may be continued, building the ''factorization tree'' until all
		the top numbers are $1$. Let $H(n)$ be the height of the tree corresponding to the number $n$, and let the symbol $\up\up$ denote tetration. In this paper, we derive the asymptotic
		 formulas for the sums
		$$\mathcal{M}(x) = \sum_{p\leqslant x} H(p-1),\ \ \mathcal{H}(x) = \sum_{n\leqslant x}2\up\up H(n),$$
		and
		$$\mathcal{L}(x) = \sum_{n\leqslant x}\dfrac{2\up\up H(n)}{2\up\up H(n+1)},$$
		where the summation in the first sum is taken over primes.
	\end{abstract}
	
	\date{\today}
	\maketitle
	
	\section{Introduction}
	Every natural number can be uniquely factorized into a product of prime powers by the Fundamental Theorem of Arithmetic. The prime powers themselves are integers and thus can be factorized too. We can continue this procedure until all the prime powers become prime. We  call the obtained decomposition the \emph{super-factorization}\footnote{The definition was proposed in 2003 by L.~Quet.} of the initial natural number.
	 The following expression is the super-factorization of the number $N = 2^{50}3^5,$
	$$N = 2^{2\cdot5^2}\cdot3^{5}.$$
	The super-factorization of the number $n$ can be visualized using a tree graph.
	The construction of this tree, denoted $T_n$, starts with the root labeled by the number $n$. The root gives a rise to as many edges as $n$ has distinct prime divisors. Each edge is labeled by the corresponding prime divisor and connects the root with a new vertex labeled by the power of the prime in the decomposition of $n$. This operation is repeated for each of the newly formed vertices. The construction of the tree ends when all the vertices that have arisen at the next step are labeled by $1$. Let $H(n)$ be the height of the tree $T_n$, that is, the length of the longest path leading from the root to a leaf vertex. As an example, Figure \ref{fig:image1} presents
	the tree associated with the number $N = 2^{50}3^5$.
	Figure \ref{fig:image2} illustrates the unlabeled trees associated with integers from $1$ to $34$.
	\begin{figure}[h]
		\caption{The graph $T_{N}$ with $ N = 2^{50}3^5.$ }
		\centering{\includegraphics[scale=0.9]{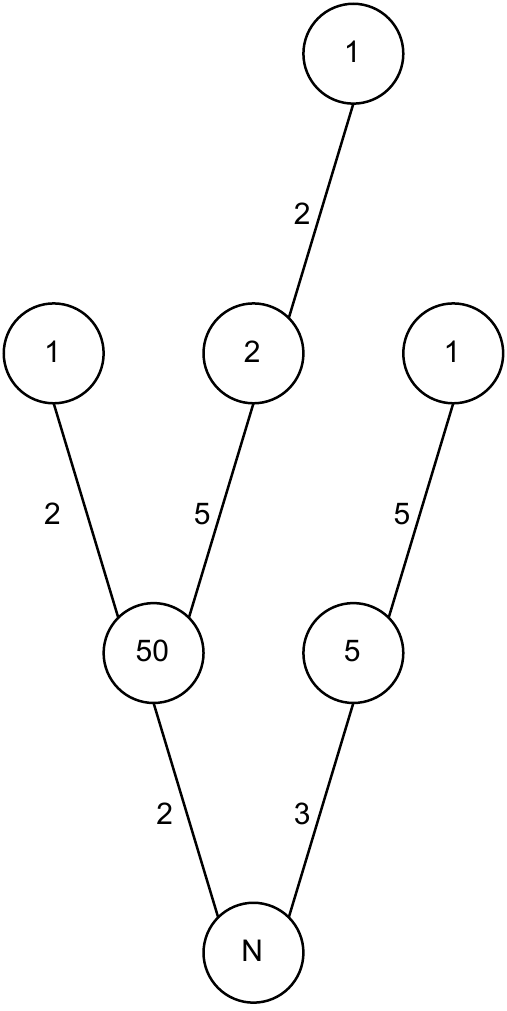}}
		\label{fig:image1}
	\end{figure}
		\begin{figure}[h]
		\caption{The unlabeled graphs $T_{n}$ with $1\leqslant n \leqslant 34$. }
		\centering{\includegraphics[scale=1.2]{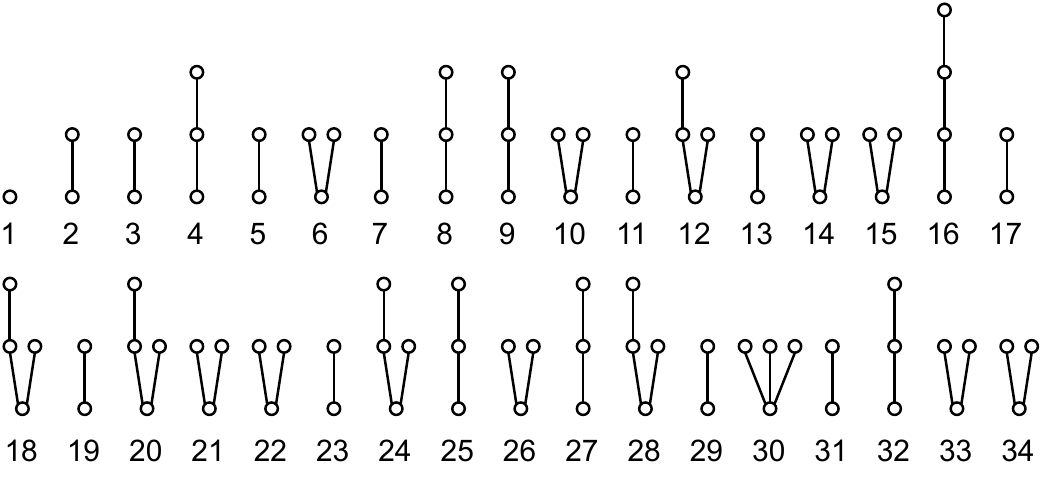}}
		\label{fig:image2}
	\end{figure}
 
The tetration, denoted $\up\up$, is defined as follows,
	$$a\up\up b = {a^{a^{\ldots^a}}},$$
	where $a$ and $b$~ are natural numbers, and the length of the power tower is $b$. For convenience, for $a\geqslant 1$ we set $a\up\up 0 = 1.$ Let $\log^*$ be the base-$2$ iterated logarithm, that is, for $x\geqslant 2$ and integer $m\geqslant 1$, the equality $\log^* x = m$ is equivalent to
	$$2\up\up m\leqslant x< 2\up\up(m+1).$$
	The unpublished book \cite{Trap-Rob} gives two problems concerning the function $H(n)$. These problems were suggested by E.~Snowden to the Mathematical Club of the University of Maryland. We formulate them below:
	\begin{enumerate}
		\item For $n\geqslant 1$ we define $$D_n = \lim_{N\to+\infty}\dfrac{\#\left\{k\leqslant N: H(k)\geqslant n\right\}}{ N}.$$
		Then the inequality
		\begin{equation}\label{ineqdn}
			\dfrac{1}{2}<(2\up\up n)D_n\leqslant 3
		\end{equation}
	holds for all $n\geqslant 1.$
		\item Let 
		\begin{equation}\label{H}
			\mathcal{F} = \lim_{N\to +\infty}\dfrac{1}{N}\sum_{n\leqslant N} H(n),
		\end{equation}
		then we have
		$$1.42333<\mathcal{F}<1.4618.$$
	\end{enumerate}
	In \cite{Iud21}, we obtained explicit formulas for $D_n$, computed the value $\mathcal{F} = 1.436157...$, and proved \eqref{H} by providing a stronger statement,
	\begin{equation}\label{HHH}\sum_{n\leqslant x} H(n) = \mathcal{F}x+O\left(x^{1/2}(\log x)^{5/4+\varepsilon}\right),\ \ \varepsilon>0.
		\end{equation}
In this paper, we find the asymptotics for the sum
$$ \mathcal{M}(x) = \sum_{p\leqslant x} H(p-1),$$
where $p$ runs through primes. We prove the following theorem.
As usual, $\pi(x)$ denotes the number of primes up to $x$.
\begin{theorem}\label{T1}
	We have
	\[
	\mathcal{M}(x) = c_0 \pi(x) + O_A \left(\dfrac{x}{(\log x)^A} \right)
	\]
	for any fixed $A \geqslant 2$, where $c_0 = 1.7083\ldots$
\end{theorem}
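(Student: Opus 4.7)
The plan is to decompose $\M(x)$ via the layer cake identity
\[
\M(x) = \sum_{n \geq 1} \pi_n(x), \qquad \pi_n(x) := \#\{p \leq x : H(p-1) \geq n\},
\]
and note that $H(m) \leq \log^* m$ implies $\pi_n(x) = 0$ for $n > \log^* x + O(1)$. It thus suffices to prove $\pi_n(x) = d_n \pi(x) + O_A(x/(\log x)^A)$ for each $n$ in this range, where $d_n$ is the natural density among primes of $\{p : H(p-1) \geq n\}$; summing over the $O(\log^* x)$ nonzero terms then gives the theorem with $c_0 = \sum_{n \geq 1} d_n$.

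The key observation is that $h_n(m) := \one[H(m) \leq n-1]$ is multiplicative, because $H(m_1 m_2) = \max\{H(m_1), H(m_2)\}$ when $\gcd(m_1, m_2) = 1$. Writing $h_n = \one * f_n$, Möbius inversion gives $f_n(q^\a) = h_n(q^\a) - h_n(q^{\a-1}) \in \{-1, 0, 1\}$; and since the smallest integer of height $n-1$ is $2\up\up(n-1)$, one checks that $f_n(q^\a) = 0$ whenever $\a < 2\up\up(n-1)$, so $\supp f_n$ lies in the $(2\up\up(n-1))$-full integers. Then
\[
\pi(x) - \pi_n(x) = \sum_{p \leq x} h_n(p-1) = \sum_{d} f_n(d)\, \pi(x; d, 1),
\]
which I split at $D = x^{1/2}/(\log x)^{B(A)}$. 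For $d \leq D$, Bombieri--Vinogradov combined with $|f_n(d)| \leq 1$ handles the error, bounded by $O_A(x/(\log x)^A)$; the truncated main term $\pi(x) \sum_{d \leq D} f_n(d)/\phi(d)$ extends to the convergent Euler product $\pi(x)(1 - d_n) = \pi(x) \prod_q \gamma_n^{(q)}$ with truncation error $O(\pi(x) D^{-1/2} \log\log D)$ coming from the $\ll \sqrt{y}$ count of squareful integers. For $d > D$, Brun--Titchmarsh gives $\pi(x;d,1) \ll x/(\phi(d) \log(x/d))$, and combined with the same sparsity of $\supp f_n$ yields a contribution of $O(x^{3/4 + o(1)})$, well within the target.

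The main obstacle will be the tail estimate for $d > D$ in the densest case $n = 2$, where $\supp f_2 = \{d^2 : d \text{ squarefree}\}$ has counting function $\asymp \sqrt{y}$ up to $y$; Brun--Titchmarsh must here be combined carefully with the sharp count of squarefree squares in long dyadic ranges to beat $x/(\log x)^A$. Identifying the constant $c_0 = 1.7083\ldots$ reduces to the explicit Euler product $d_n = 1 - \prod_q \gamma_n^{(q)}$, where Dirichlet's theorem yields $\gamma_n^{(q)} = \frac{q-2}{q-1} + \sum_{\a \geq 1,\, H(\a) \leq n-2} q^{-\a}$ for $q \geq 3$ (with the constant term absent for $q = 2$, since $2 \mid p-1$ always). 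The series $\sum_n d_n$ converges doubly exponentially: a union bound over ``witness'' prime powers $q^\a$ with $\a \geq 2\up\up(n-1)$ already gives $d_n \ll 2^{1 - 2\up\up(n-1)}$.
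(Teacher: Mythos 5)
Your proposal follows essentially the same route as the paper: a layer-cake decomposition of $\mathcal{M}(x)$ over height levels, the multiplicative identity $\mathbb{1}_{H(\cdot)\leq k}=\sum_{d\mid \cdot}\mathcal{F}_k(d)$ with $\mathcal{F}_k$ supported on $(2\uparrow\uparrow k)$-full integers, Bombieri--Vinogradov on the small $d$, sparsity of squareful $d$ on the tail, and the doubly exponential decay $d_n\ll 2^{-2\uparrow\uparrow(n-1)}$ to sum the densities (this is exactly the paper's Lemma~\ref{L7} together with \eqref{dmdm}, \eqref{dm_ineq} and Section~3). The one thing you flag as the ``main obstacle''---the tail $d>D$ in the $n=2$ case---is in fact immediate and needs no Brun--Titchmarsh at all: the paper simply takes $Y=x^{1/3}$, uses the trivial $\pi(x;d,1)\leq x/d$, and invokes the squareful-sparsity bound $\sum_{d>Y,\ p\mid d\Rightarrow p^2\mid d}1/d\ll (\log Y)/\sqrt{Y}$ of Lemma~\ref{L4}, which already gives a power saving far below $x/(\log x)^A$.
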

In \cite{Iud21}, we used the complex integration method to provide the estimates related to $H(n)$. However, this method no longer works for the case of $\mathcal{M}(x)$. Instead, one of the key ingredients for establishing Theorem \ref{T1} will be the identity
\begin{equation}\label{MainIdent}
	\mathbb{1}_{H(n)\leqslant k} = \sum_{d|n}\mathcal{F}_k(d),
\end{equation}
where $\mathcal{F}_k(n)$ are some functions having \emph{small support} to be defined later. This equality is a generalization of the well-known identity $\mu^2(n) = \sum_{d^2|n}\mu(d),$ where $\mu(n)$ denotes the M$\ddot{\text{o}}$bius function. The same equality allows us to find an elementary proof (without using complex analysis) of \eqref{HHH}.

Another result of this paper arises from attempts to generalize the main result of \cite{Iud21}. Our methods can be extended to the sums of the form $\sum_{n \leqslant x} f(H(n))$, provided that the function $f(n)$ does not grow too fast. In the case of a rapidly growing function $f(n)$ (say, $f(n) = 2\up\up n$), one has to develop a different approach. We prove the following theorem.
\begin{theorem}\label{T2}
	Set $\mathcal{H}(x) = \sum_{n \leqslant x} 2 \uparrow \uparrow H(n)$, then
	\begin{equation}\label{HH}
		\mathcal{H}(x) = \dfrac{x \log^* x}{2} + c_1x + H_1(x) + O \left( x^{1/2} \left( \log x\right) ^{{9}/{4}} \right), 
	\end{equation}
	where \[
	c_1 = \sum\limits_{k=1}^{+\infty} \left((2 \uparrow \uparrow k) \mathcal{D}_k - \dfrac{1}{2} \right)  = 1.813 \ldots,
	\]
	\[
	H_1(x) = 2 \uparrow \uparrow \log^* x \left( \mathcal{D}_{\log^* x}(x) - x\mathcal{D}_{\log^* x}\right), 
	\]
	and $\mathcal{D}_k(x)$, $\mathcal{D}_k$ are defined in \eqref{DDkx} and \eqref{Dkx} respectively.
	Moreover, the following inequalities hold:
	\begin{itemize}
		\item $H_1(x) \ll x$; 
		\item $H_1(x) \ll x^{1/2}\left( \log x\right) ^{{9}/{4}}$, if $x = (2 \uparrow \uparrow N)-1$ for some natural $N$;
		\item $H_1(x) = {x}/{2} + O(x^\Delta)$, where $\Delta = 2 - \frac{\log 3}{\log 2} = 0.41503\ldots$, if $x = 2 \uparrow \uparrow N$ for some natural $N$. 
	\end{itemize}
\end{theorem}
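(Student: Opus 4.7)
The plan is to stratify $\mathcal{H}(x)$ by the value of $H(n)$. Since $n\geq 2\up\up H(n)$ forces $H(n)\leq M:=\log^{*}x$ for every $n\leq x$, partitioning by height gives
\[
\mathcal{H}(x)=\sum_{k=0}^{M}(2\up\up k)\,\#\{n\leq x\colon H(n)=k\},
\]
and summation by parts rewrites this as a weighted sum of the counts $\mathcal{D}_k(x)$ with weights $(2\up\up k)-(2\up\up(k-1))$, plus an explicit boundary contribution at the top level $k=M$.

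To convert this into the asymptotic \eqref{HH}, I would use \eqref{MainIdent} in the form $\mathcal{D}_k(x)=x\mathcal{D}_k+E_k(x)$, where
\[
E_k(x)=-\sum_{d}\mathcal{F}_k(d)\{x/d\}
\]
and $\mathcal{D}_k$ is the density obtained by replacing $\lfloor x/d\rfloor$ by $x/d$. The main term is then processed by writing each summand as $\bigl((2\up\up k)\mathcal{D}_k-\tfrac12\bigr)+\tfrac12-\text{(a shifted piece)}$: the $\tfrac12$'s contribute exactly $\tfrac{M}{2}=\tfrac{1}{2}\log^{*}x$, the bracketed pieces assemble into $c_1=\sum_{k\geq 1}((2\up\up k)\mathcal{D}_k-\tfrac12)$ (whose convergence presupposes a quantitative refinement of \eqref{ineqdn}), and the shifted sums converge absolutely because $\mathcal{D}_k$ itself decays doubly exponentially. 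The truncation residue at level $M$ packages cleanly into the boundary term $H_1(x)=(2\up\up M)(\mathcal{D}_M(x)-x\mathcal{D}_M)$ in \eqref{HH}. The three stated bounds on $H_1(x)$ then follow from a case-by-case analysis of the discrepancy $\mathcal{D}_M(x)-x\mathcal{D}_M$: the generic bound $\ll x$ from \eqref{ineqdn}; the sharper $\ll x^{1/2}(\log x)^{9/4}$ at $x=(2\up\up N)-1$, where the sawtooth contribution to $E_M(x)$ nearly vanishes because no $n\leq x$ attains height $N$; and the explicit $x/2+O(x^\Delta)$ at $x=2\up\up N$ via a direct count showing that $n=2\up\up N$ is the unique integer $\leq x$ with $H(n)\geq N$, so $\mathcal{D}_M(x)=1$ and the remainder is controlled by $(2\up\up N)\mathcal{D}_N-\tfrac12$.

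The main technical obstacle is bounding the error sum $\mathcal{E}(x):=\sum_{k=1}^{M}((2\up\up k)-(2\up\up(k-1)))E_k(x)$. The weights reach $2\up\up M$, which is comparable to $x$ itself, so any crude term-by-term estimate is ruinous. One must exploit the ``small support'' property of $\mathcal{F}_k$ promised after \eqref{MainIdent} quantitatively, showing an estimate of the shape $|E_k(x)|\ll(2\up\up k)^{-1}x^{1/2}(\log x)^{O(1)}$ — morally the iterated analogue of Landau's $\sum_{n\leq x}\mu^2(n)=\tfrac{6}{\pi^2}x+O(\sqrt{x})$, with the density savings exactly compensating the growth of the weights. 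Summed against the increasing weights, these individual losses telescope to the stated $O(x^{1/2}(\log x)^{9/4})$, and this is where the bulk of the work will lie.
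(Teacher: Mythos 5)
Your decomposition is correct in outline --- stratify by $H(n)=k$, apply the asymptotic \eqref{Dkx} at the interior levels, and absorb the unexpanded boundary level $k=\log^{*}x$ into $H_1(x)$ --- and this matches the paper's strategy. But you have misdiagnosed the ``main technical obstacle,'' and the cure you propose is both unnecessary and unsupported by anything in the paper. Once the level $k=\log^{*}x$ is packaged into $H_1(x)$ rather than expanded, the remaining weights are \emph{not} comparable to $x$: for $k\leqslant\log^{*}x-1$ one has
\[
2\uparrow\uparrow k\ \leqslant\ 2\uparrow\uparrow(\log^{*}x-1)\ =\ \log_2\!\left(2\uparrow\uparrow\log^{*}x\right)\ \leqslant\ \log_2 x,
\]
so the paper simply inserts the \emph{uniform} discrepancy bound $|\mathcal{D}_k(x)-x\mathcal{D}_k|\ll x^{1/2}(\log x)^{5/4}$ from Lemma~\ref{L2} term by term and sums the near-geometric weights:
\[
\sum_{k\leqslant\log^{*}x-1}(2\uparrow\uparrow k)\,O\!\left(x^{1/2}(\log x)^{5/4}\right)\ \ll\ (\log x)\cdot x^{1/2}(\log x)^{5/4}\ =\ x^{1/2}(\log x)^{9/4}.
\]
No refined estimate of the shape $|E_k(x)|\ll(2\uparrow\uparrow k)^{-1}x^{1/2}(\log x)^{O(1)}$ is needed, and none is proved in the paper; there is no ``telescoping of losses.'' That refinement would only become relevant if the top-level weight $2\uparrow\uparrow\log^{*}x\asymp x$ were left inside the error sum, but by construction it is not: $H_1(x)$ itself is controlled by the \emph{trivial} inequalities $D_{\log^{*}x}(x)\leqslant 4x/(2\uparrow\uparrow\log^{*}x)$ and $\mathcal{D}_{\log^{*}x}\leqslant 4/(2\uparrow\uparrow\log^{*}x)$ from Lemma~\ref{L2} (combined with Lemma~\ref{L6}), giving $H_1(x)\ll x$ with no sawtooth analysis at all.

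Two smaller inaccuracies. Your mechanism for the bound $H_1(x)\ll x^{1/2}(\log x)^{9/4}$ when $x=(2\uparrow\uparrow N)-1$ --- ``the sawtooth contribution to $E_M(x)$ nearly vanishes because no $n\leqslant x$ attains height $N$'' --- is not what is happening. The point is that at such $x$ one has $\log^{*}x=N-1$, so the prefactor $2\uparrow\uparrow\log^{*}x=2\uparrow\uparrow(N-1)\leqslant\log_2 x$ is \emph{small}; one then applies \eqref{Dkx} to $\mathcal{D}_{N-1}(x)$ with its generic $O(x^{1/2}(\log x)^{5/4})$ error and multiplies by the small prefactor. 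No special cancellation in the discrepancy is used or claimed. Also, in the Abel summation the weights $(2\uparrow\uparrow k)-(2\uparrow\uparrow(k-1))$ pair with the cumulative counts $D_k(x)=\#\{n\leqslant x:H(n)\geqslant k\}$, not with the level sets $\mathcal{D}_k(x)$ as you wrote; the paper sidesteps this entirely by working directly with $\sum_{k}(2\uparrow\uparrow k)\mathcal{D}_k(x)$. Your observation that convergence of $c_1$ requires a quantitative refinement of \eqref{ineqdn} is correct, and that refinement is exactly Lemma~\ref{L6}.
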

Equality \eqref{MainIdent} also allows us (see Lemma \ref{L8}) to find the asymptotics for
\begin{equation}\label{HHkmx}
	H_{k,m}(x) = \#\left\{n\leqslant x: H(n)\leqslant k, H(n+1)\leqslant m\right\},
\end{equation}
coinciding for $k=m=1$ with the sum
$$\mathcal{E}(x) = \sum_{n\leqslant x}\mu^2(n)\mu^2(n+1).$$
The sum $\mathcal{E}(x)$ was previously studied in \cite{Carl32}, \cite{Heath-Br84} and \cite{Reus14}. Trying to keep the paper elementary, we do not use the deep methods of these papers when studying $H_{k,m}(x).$ For the same reason, some lemmas in our paper are not proved in the strongest form. From the asymptotic formula for $H_{k,m}(x)$ we deduce the following theorem.
\begin{theorem}\label{T3} Set
	\[
	\mathcal{L}(x) = \sum\limits_{n \leqslant x} \dfrac{2 \uparrow \uparrow H(n)}{2 \uparrow \uparrow H(n+1)},
	\]
	then we have
	\begin{equation}\label{LL(x)}
		\mathcal{L}(x) = c_2 x \log^*x + c_3x + L_1(x) + O \left( x^{{5}/{8}} (\log x)^3 \right),
	\end{equation}
	where $c_2$ is given by\footnote{The number $c_2$ is the sum of a rapidly convergent series of rational numbers, and hence is a Liouville number; so $c_2$ is transcendental.} 
	$$c_2= \dfrac{457}{1944}+ \dfrac{1}{3} \sum\limits_{m=3}^{+\infty} \left( \dfrac{1}{2 \uparrow \uparrow m} \left(\dfrac{1}{3^{2 \uparrow \uparrow (m-1)}} - \dfrac{1}{3^{2 \uparrow \uparrow m}}\right) \right) = 0.23533\ldots,$$
	$c_3$ is some constant, the function $L_1(x)$ is defined as
	\begin{equation}L_1(x) = 2 \uparrow \uparrow \log^* x \sum\limits_{m=1}^{+\infty} \dfrac{\mathcal{D}_{\log^* x,m}(x) - x \mathcal{D}_{\log^* x, m}}{2 \uparrow \uparrow m},
	\end{equation} 
	 and $\mathcal{D}_{k,m}(x)$, $\mathcal{D}_{k,m}$ are defined in \eqref{DDkmx} and \eqref{Dkm}, respectively.
	Moreover, for $L_1(x)$ the following inequalities hold:
	\begin{itemize}
		\item $L_1(x)\ll x$; 
		\item $L_1(x) \ll x^{5/8} (\log x)^3$, if $x = (2\up\up N) -1$ for some natural $N$. 
	\end{itemize}
\end{theorem}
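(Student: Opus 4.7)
The plan is to reduce Theorem \ref{T3} to the joint distribution asymptotic for $H_{k,m}(x)$ given by Lemma \ref{L8}. Writing $N_{k,m}(x) = \#\{n\leq x: H(n)=k,\ H(n+1)=m\}$, inclusion-exclusion gives $N_{k,m}(x) = H_{k,m}(x) - H_{k-1,m}(x) - H_{k,m-1}(x) + H_{k-1,m-1}(x)$, and
\[
  \mathcal{L}(x) = \sum_{k,m\geq 1} \frac{2\up\up k}{2\up\up m}\, N_{k,m}(x).
\]
Since $H(n)\leq \log^* x + O(1)$ for $n\leq x$, the $k$-sum truncates at $K := \log^* x + O(1)$. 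I would substitute the Lemma \ref{L8} asymptotic, which has the shape $H_{k,m}(x) = x\mathcal{D}_{k,m} + (\mathcal{D}_{k,m}(x) - x\mathcal{D}_{k,m}) + E_{k,m}(x)$, splitting $\mathcal{L}(x)$ into a density piece, a small-support correction piece and a genuine error piece.

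For the density piece, the factor $(2\up\up m)^{-1}$ forces the $m$-sum to converge doubly exponentially, leaving $x\sum_{k=1}^K (2\up\up k)\gamma_k$ for an explicit sequence $\gamma_k$ built from the differences of the $\mathcal{D}_{k,m}$. The key structural fact is that $(2\up\up k)\gamma_k$ has a finite limit $c_2$ as $k\to\infty$, which I would derive by analysing the extremal shape of integers with $H(n)=k$ for large $k$. Such $n$ are dominated by the single tower $n\sim 2\up\up k$ (so $n+1$ is odd and its $H$-structure is controllable), producing the leading rational $457/1944$, while the next-order contribution from $n = 3^M$ with $H(M)=k-1$ produces the series in $3^{-2\up\up (m-1)}$ appearing in the stated formula for $c_2$. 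Writing $(2\up\up k)\gamma_k = c_2 + \eta_k$ with $\sum_k|\eta_k|<\infty$ and summing over $k\leq K$ yields $c_2\,x\log^* x + c_3\,x$, with $c_3$ arising from $\sum_k \eta_k$ together with boundary adjustments.

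For the correction piece, the rapid $m$-decay together with appropriate size bounds on $\mathcal{D}_{k,m}(x) - x\mathcal{D}_{k,m}$ coming from the sparsity of the support of $\mathcal{F}_k$ imply that only the top level $k = K$ contributes, giving exactly the $L_1(x)$ of the statement. The bound $L_1(x)\ll x$ follows from the trivial estimate $|\mathcal{D}_{K,m}(x) - x\mathcal{D}_{K,m}|\leq x$; when $x = 2\up\up N - 1$ the range of summation is aligned with the support of $\mathcal{F}_K$, so the correction is itself of the order of the Lemma \ref{L8} error, yielding $L_1(x)\ll x^{5/8}(\log x)^3$. The principal obstacle is the genuine error piece: a naive estimate inflates $E_{k,m}(x)$ by the weight $2\up\up K \asymp x$, which would be fatal. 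The saving must come from Lemma \ref{L8} delivering $E_{k,m}(x)$ with enough decay in $k$ — the support of $\mathcal{F}_k$ sits on extremely sparse iterated powers — so that $\sum_{k\leq K}(2\up\up k)\max_m |E_{k,m}(x)|\ll x^{5/8}(\log x)^3$. Combining the three pieces then gives (\ref{LL(x)}).
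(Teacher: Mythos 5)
Your overall blueprint matches the paper's: write $\mathcal{L}(x)$ as a weighted sum of $\mathcal{D}_{k,m}(x)$, split off the top level $k=\log^* x$ as $L_1(x)$, and extract $c_2\log^*x + c_3$ from the asymptotics of $\mathcal{D}_{k,m}$. However, your diagnosis of the error control contains a genuine gap. You claim the saving ``must come from Lemma \ref{L8} delivering $E_{k,m}(x)$ with enough decay in $k$,'' and display the bound
\[
\sum_{k\leqslant K}(2\up\up k)\,\max_m|E_{k,m}(x)|\ll x^{5/8}(\log x)^3,
\]
with $K=\log^*x$. This is not the mechanism, and as written the estimate is false. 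The error in Lemma \ref{L9} is $O\left(x^{5/8}(\log x)^2\right)$ uniformly in $k,m$; there is no meaningful decay in $k$ once $m$ is small (for $m=1$ the exponent in Lemma \ref{L8} cannot drop below $\tfrac12$). The actual saving is elementary: for $k\leqslant\log^*x-1$ one has $2\up\up k\leqslant 2\up\up(\log^*x-1)\leqslant\log_2 x$, so $\sum_{k\leqslant\log^*x-1}2\up\up k\ll\log x$, and the weighted uniform error sums to $O\left(x^{5/8}(\log x)^3\right)$ by absorbing a single extra $\log$. For $k=\log^*x$, the weight $2\up\up\log^*x$ can be as large as $x$, and no version of the Lemma \ref{L8}/\ref{L9} error term can compensate; this is exactly why the top level must be pulled out as $L_1(x)$ and bounded via the trivial inequalities $\mathcal{D}_{k,m}(x)\leqslant D_k(x)\ll x/2\up\up k$ and $\mathcal{D}_{k,m}\leqslant D_k\ll 1/2\up\up k$ (Lemma \ref{L2}), \emph{not} via Lemma \ref{L9}. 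Your $\sum_{k\leqslant K}$ running up to $K$ reintroduces exactly the ``fatal'' inflation you flagged.

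A smaller but real issue: the density piece requires showing $(2\up\up k)\sum_m\mathcal{D}_{k,m}/2\up\up m \to c_2$ at a summable rate, and identifying $457/1944$ together with the exotic series. This is precisely the content of Lemma \ref{L10}, which needs a separate case analysis ($m\in\{1,2\}$ versus $m\geqslant3$, and likewise for $k$) built on the extremal structure of heights (Lemma \ref{L3}). Your heuristic attributing the rational to $n\sim 2\up\up k$ and the series to $n=3^M$ points in roughly the right direction, but the actual computation shows $457/1944$ arises from the $m=1$ and $m=2$ contributions ($\frac{2}{9}+\frac{25}{1944}$) and the tail series from $k,m\geqslant3$; the plan as written doesn't supply the asymptotics needed to justify this.
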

We note that the methods used in our work can also be applied to the estimates of other sums. For example, proceeding in the same way as in the proof of Theorem \ref{T1}, one can prove that
$$\sum_{n \leqslant x}H(n^2+1) = cx(1+o(1)),$$
for some positive constant $c$.

\bigskip 

\textbf{Acknowledgements}. The work was supported by the
Theoretical Physics and Mathematics Advancement Foundation “BASIS”.

\section{Auxiliary Results}
We set $f_m(n) = \mathbb{1}_{H(n) \leqslant m}$, where $n \geqslant 1$ and $m \geqslant 0$. It is clear that $f_m(n)$ is a multiplicative function. For convenience, we set
$$f_0(\alpha) = \begin{cases}
	1, \ \text{if}\ \alpha\in\{0, 1\}, \\
	0, \ \text{otherwise}.
\end{cases}
$$
Then for any $m \geqslant 1$, $\alpha \geqslant 0$, and any prime $p$ we get
\begin{equation}\label{f_{m-1}}
	f_m(p^\alpha) = f_{m-1}(\alpha).
\end{equation}
We define the function $\mathcal{F}_m(n)$ by the equality
\begin{equation}\label{F_m(d)}
	f_m(n) = \sum\limits_{d \mid n} \mathcal{F}_m(d),
\end{equation}
and prove the following lemma for it.

\begin{lem}\label{L1}
	For $m \geqslant 1$ we have
	\begin{equation}\label{F_m(n)}
		\mathcal{F}_m(n) = \prod\limits_{p^\alpha || n} \left( f_{m-1}(\alpha) - f_{m-1}(\alpha - 1) \right),
	\end{equation}
	where $|\mathcal{F}_m(n)| \leqslant 1$, and if $\mathcal{F}_m(n) \ne 0$, then every prime dividing $n$ is occurs in the canonical decomposition of $n$ with a degree at least $2 \uparrow \uparrow m$.
	\end{lem}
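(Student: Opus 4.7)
The plan is to combine M\"obius inversion with the multiplicativity of $f_m$. Since $f_m$ is multiplicative and $\mathcal{F}_m = \mu * f_m$ is a Dirichlet convolution of multiplicative functions, $\mathcal{F}_m$ is itself multiplicative, so it is determined by its values on prime powers. For any prime $p$ and $\alpha \geq 1$, the vanishing of $\mu$ on non--squarefree prime powers gives
$$
\mathcal{F}_m(p^\alpha) \;=\; \sum_{j=0}^{\alpha} \mu(p^{\alpha-j})\, f_m(p^j) \;=\; f_m(p^\alpha) - f_m(p^{\alpha-1}),
$$
and \eqref{f_{m-1}} rewrites this as $f_{m-1}(\alpha) - f_{m-1}(\alpha-1)$, where the convention $f_{m-1}(0) = 1$ is forced by taking $\alpha = 0$ in \eqref{f_{m-1}} together with $f_m(1) = 1$. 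Multiplicativity then yields \eqref{F_m(n)}, and since $f_{m-1}$ is $\{0,1\}$-valued, every factor in \eqref{F_m(n)} lies in $\{-1,0,1\}$, giving $|\mathcal{F}_m(n)| \leq 1$ at once.

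For the support claim I will first establish the auxiliary fact that the smallest positive integer $n$ with $H(n) \geq m$ equals $2\up\up m$. The argument is by induction on $m$: the base $m=0$ is $H(1)=0$. For the inductive step, fix $m \geq 1$ and assume the claim at $m-1$. Since $H(n) = \max_{p^\alpha || n}\bigl(1 + H(\alpha)\bigr)$ for $n \geq 2$, the inequality $H(n) \geq m$ forces $H(\alpha) \geq m-1$ for some prime power $p^\alpha || n$; the induction hypothesis then gives $\alpha \geq 2\up\up(m-1)$, hence $n \geq p^\alpha \geq 2^{2\up\up(m-1)} = 2\up\up m$. Equality is attained at $n = 2^{2\up\up(m-1)}$, completing the induction.

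Granting this, suppose $\mathcal{F}_m(n) \neq 0$ and let $p^\alpha || n$. Then $f_{m-1}(\alpha) \neq f_{m-1}(\alpha-1)$; but if $\alpha < 2\up\up m$, then $\alpha-1$ and $\alpha$ are both strictly less than $2\up\up m$, and the auxiliary fact forces $H(\alpha), H(\alpha-1) \leq m-1$, so $f_{m-1}(\alpha) = f_{m-1}(\alpha-1) = 1$, a contradiction. Therefore $\alpha \geq 2\up\up m$, as required. The only substantive step is the minimality fact for $2\up\up m$; the rest is routine manipulation of multiplicative functions, with the sole subtlety being the boundary case $\alpha = 1$, handled by the convention $f_{m-1}(0) = 1$.
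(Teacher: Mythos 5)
Your proof is correct and follows essentially the same route as the paper: M\"obius inversion combined with multiplicativity to obtain the Euler-product form of $\mathcal{F}_m$, the observation that each local factor lies in $\{-1,0,1\}$, and the minimality of $2\uparrow\uparrow m$ as the least integer of height $m$ to get the support claim. The only difference is that the paper simply cites this last minimality fact (it is established in the earlier paper [Iud21]), whereas you supply a short inductive proof of it via the recursion $H(n) = \max_{p^\alpha\|n}\bigl(1+H(\alpha)\bigr)$; that self-contained addition is sound and otherwise the arguments coincide.
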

\begin{proof}
	Using the M$\ddot{\text{o}}$bius inversion formula and equality \eqref{f_{m-1}}, we obtain equality \eqref{F_m(n)} from \eqref{F_m(d)}. Since $f_m(n) \in \left\{ 0, 1\right\}$ for $m, n \geqslant 0$, it follows that \eqref{F_m(n)} implies $|\mathcal{F}_m (n)| \leqslant 1$. Let $p^\alpha || n$, where $\alpha < 2 \uparrow \uparrow m$. Since $r = 2 \uparrow \uparrow m$ is the smallest number for which $H(r) = m$, then we have $f_{m-1}(\alpha) = f_{m-1 }(\alpha-1) = 1$. Hence equality \eqref{F_m(n)} implies $\mathcal{F}_m(n) = 0$. The claim follows.
\end{proof}

\begin{lem}\label{L2}
	Set $D_k(x) = \# \left\{ n \leqslant x: H(n) \geqslant k \right\}$, then for $x \geqslant 1$ and $k \geqslant 1$, we have
	\[
	D_k(x) = D_kx + O \left( x^{1/2}\left( \log x\right)^{{5}/{4}} \right),
	\]
	where the implied constant is absolute, $D_1 = 1$, and $D_k$ for $k \geqslant 2$ is defined by
	\[
	D_k = 1- \prod\limits_{p}\left( 1 + \sum\limits_{\alpha = 1}^{+\infty} \dfrac{f_{k-1}(p^\alpha) - f_{ k-1}(p^{\alpha-1})}{p^\alpha} \right).
	\]
	Moreover, for $x \geqslant x_0$ and $k \geqslant 1$, we have $D_k(x) \leqslant \dfrac{4x}{2 \uparrow \uparrow k}$ and $D_k \leqslant \dfrac{4} {2 \uparrow \uparrow k}$.
\end{lem}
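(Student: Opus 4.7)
The plan is to derive the asymptotic directly from the Möbius-type identity \eqref{F_m(d)} combined with Lemma \ref{L1}, avoiding complex analysis entirely. I would start by rewriting $D_k(x) = \lfloor x\rfloor - \sum_{n\leq x} f_{k-1}(n)$. The case $k=1$ is immediate from $f_0(n) = \one_{n=1}$, yielding $D_1(x) = \lfloor x\rfloor - 1 = D_1 x + O(1)$. For $k\geq 2$, substituting \eqref{F_m(d)} and swapping the order of summation produces
\[
\sum_{n\leq x} f_{k-1}(n) \;=\; \sum_{d\leq x}\mathcal{F}_{k-1}(d)\lfloor x/d\rfloor \;=\; x\sum_{d=1}^{\infty}\frac{\mathcal{F}_{k-1}(d)}{d} \;-\; R(x),
\]
where $R(x) = x\sum_{d>x}\mathcal{F}_{k-1}(d)/d + \sum_{d\leq x}\mathcal{F}_{k-1}(d)\{x/d\}$.

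Next I would evaluate the main series. By Lemma \ref{L1} the function $\mathcal{F}_{k-1}$ is multiplicative with $\mathcal{F}_{k-1}(p^\alpha) = f_{k-2}(\alpha)-f_{k-2}(\alpha-1)$, which by \eqref{f_{m-1}} coincides with $f_{k-1}(p^\alpha)-f_{k-1}(p^{\alpha-1})$. The Dirichlet series therefore factors as exactly the Euler product in the definition of $D_k$, giving $\sum_{d\geq 1}\mathcal{F}_{k-1}(d)/d = 1-D_k$, and hence a main contribution $D_k x + O(1)$, the $O(1)$ absorbing the defect $\lfloor x\rfloor - x$.

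For $R(x)$ I would exploit the two properties furnished by Lemma \ref{L1}: $|\mathcal{F}_{k-1}(d)|\leq 1$, and its support is concentrated on $q$-full integers with $q = 2\up\up(k-1)\geq 2$ for $k\geq 2$. Every squareful $d$ admits a unique representation $d = a^2b^3$ with $b$ squarefree, so $\#\{d\leq x : d \text{ squareful}\}\ll \sqrt{x}$ with an absolute constant. Hence $\sum_{d\leq x}|\mathcal{F}_{k-1}(d)|\ll \sqrt{x}$ and, by Abel summation, $x\sum_{d>x}|\mathcal{F}_{k-1}(d)|/d\ll \sqrt{x}$, yielding $R(x)\ll \sqrt{x}$, which is in fact sharper than the claimed error $O(x^{1/2}(\log x)^{5/4})$. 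The key observation is that $k$-uniformity is automatic from $|\mathcal{F}_{k-1}|\leq 1$ combined with the squareful support condition.

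The remaining inequalities follow from the observation that $H(n)\geq k$ forces $p^{2\up\up(k-1)}\mid n$ for some prime $p$, since $2\up\up(k-1)$ is the smallest integer of height $k-1$. For $k\geq 2$ this gives
\[
D_k(x)\;\leq\; \sum_p \lfloor x/p^s\rfloor \;\leq\; x\sum_p p^{-s},\qquad s = 2\up\up(k-1)\geq 2.
\]
An elementary estimate $\sum_p p^{-s}\leq 2^{-s}(1+2/(s-1))\leq 4\cdot 2^{-s} = 4/(2\up\up k)$ yields $D_k(x)\leq 4x/(2\up\up k)$; for $k=1$ the bound is trivial. Letting $x\to\infty$ in the main asymptotic then gives $D_k\leq 4/(2\up\up k)$. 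There is no substantial obstacle in the argument; the only point requiring care is ensuring that every constant remains absolute in $k$, which is built in automatically from the uniform bound $|\mathcal{F}_{k-1}|\leq 1$ together with the squareful support.
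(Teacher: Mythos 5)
The paper does not prove this lemma directly; it simply cites Theorem~2 of \cite{Iud21}, where the author used complex integration. Your proposal instead gives a self-contained elementary argument based on the convolution identity \eqref{F_m(d)} and the properties of $\mathcal{F}_{m}$ from Lemma~\ref{L1} (boundedness and squarefull support) --- which is precisely the elementary route the introduction of the paper alludes to (``the same equality allows us to find an elementary proof\ldots''), though it is never written out. Your computation is correct: the Euler factorization of $\sum_d \mathcal{F}_{k-1}(d)/d$ matches the definition of $D_k$ via $\mathcal{F}_{k-1}(p^\alpha) = f_{k-1}(p^\alpha) - f_{k-1}(p^{\alpha-1})$, the error $R(x)$ is handled by the classical bound $\#\{d\le t: d\text{ squarefull}\}\ll\sqrt t$ with an absolute constant together with $|\mathcal{F}_{k-1}|\le 1$ and partial summation, and the $D_k(x)\le 4x/(2\uparrow\uparrow k)$ bound follows from the union bound $D_k(x)\le\sum_p\lfloor x/p^s\rfloor$ with $s=2\uparrow\uparrow(k-1)$ and the estimate $\sum_p p^{-s}\le 2^{-s}\bigl(1+\tfrac{2}{s-1}\bigr)\le 4\cdot 2^{-s}=4/(2\uparrow\uparrow k)$ for $s\ge 2$. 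Note that your error term $O(\sqrt x)$ is in fact sharper than the $O(x^{1/2}(\log x)^{5/4})$ stated in the lemma (which is inherited from the complex-analytic proof in \cite{Iud21}), and the uniformity in $k$ is automatic from your squarefull-support argument. Everything checks out.
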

\begin{proof}
	See Theorem 2 in \cite{Iud21}.
\end{proof}
Using Lemma \ref{L1} and \eqref{f_{m-1}} for $k \geqslant 2$ we obtain \[
D_k = 1 - \sum\limits_{d=1}^{+\infty} \dfrac{\mathcal{F}_{k-1}(d)}{d} = -\sum\limits_{d > 1 } \dfrac{\mathcal{F}_{k-1}(d)}{d}.
\]
Further, for $k \geqslant 1$ we set $\mathcal{D}_k = D_k - D_{k+1}$.
We also put
\begin{equation}\label{DDkx}
	\mathcal{D}_k(x) = \# \left\{ n \leqslant x: H(n) = k \right\}.
\end{equation}
Hence for $x \geqslant 1$ we have \begin{equation}\label{Dkx}
	\mathcal{D}_k(x) = \mathcal{D}_kx + O \left(x^{1/2}\left( \log x \right)^{{5}/{4}}\right),
\end{equation}
where the constant in the symbol $O$ is absolute.
\begin{lem}\label{L3}
	Let $m \geqslant 3$, $\alpha = 2 \uparrow \uparrow (m-1)$, and $n < 3^\alpha$. Then $H(n) = m$ if and only if $n$ has the form $n = 2^\alpha a$, where $a < 1.5^\alpha$ is odd. Moreover, if $n \ne 2^\alpha a$, then $H(n) \leqslant m-1$.
\end{lem}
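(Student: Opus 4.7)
The plan is to establish the biconditional by directly analyzing $H(n)$ through its definition $H(n)=\max_{p^{\beta}||n}(1+H(\beta))$, and then deduce the \emph{moreover} statement.

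For the easy direction, suppose $n = 2^{\alpha} a$ with $a < 1.5^{\alpha}$ odd. The factor $p=2$ contributes $1 + H(\alpha) = 1 + (m-1) = m$, since $\alpha = 2\up\up(m-1)$. For any odd prime $q$ appearing in $a$ with exponent $\gamma$, we have $q^{\gamma} \leq a < 1.5^{\alpha} < 3^{\alpha}$, so in particular $\gamma < \alpha$ (otherwise $q^{\gamma} \geq 3^{\alpha}$). Since the smallest integer of height $m-1$ is $2\up\up(m-1) = \alpha$, this forces $H(\gamma) \leq m-2$, so the odd prime power contributes at most $m-1$. Taking the maximum over prime-power factors gives $H(n) = m$.

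For the harder direction, suppose $H(n) = m$. Pick a prime power $p^{\beta}||n$ achieving the height, so $H(\beta) = m-1$ and hence $\beta \geq 2\up\up(m-1) = \alpha$. Since $p^{\beta} \leq n < 3^{\alpha}$, and $p \geq 3$ would yield $p^{\beta} \geq 3^{\alpha}$, we must have $p = 2$. The main obstacle is to rule out $\beta > \alpha$. To this end, I would pick a prime power $q^{c}||\beta$ realizing $H(c) = m-2$ (so $c \geq 2\up\up(m-2)$, noting $\alpha = 2^{2\up\up(m-2)}$) and split into three cases:
\begin{itemize}
\item If $q = 2$ and $c = 2\up\up(m-2)$, then $q^{c} = \alpha$ is the exact power of $2$ in $\beta$, so $\beta/\alpha$ is odd and $>1$, hence $\beta \geq 3\alpha$ and $2^{\beta} \geq 8^{\alpha} > 3^{\alpha}$.
\item If $q = 2$ and $c > 2\up\up(m-2)$, then $\beta \geq 2^{c} \geq 2\alpha$, so $2^{\beta} \geq 4^{\alpha} > 3^{\alpha}$.
\item If $q \geq 3$, then $\beta \geq q^{c} \geq 3^{c} \geq \alpha^{\log_{2} 3}$, and for $\alpha \geq 4$ one has $\alpha^{\log_{2} 3 - 1} \geq 4^{0.585} > \log_{2} 3$, so $\beta \geq \alpha \log_{2} 3$ and $2^{\beta} \geq 3^{\alpha}$.
\end{itemize}
Each case contradicts $n < 3^{\alpha}$, so $\beta = \alpha$. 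Writing $n = 2^{\alpha} a$ with $a$ odd, the bound $n < 3^{\alpha}$ forces $a < 1.5^{\alpha}$.

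The \emph{moreover} statement follows from the same analysis: for $n < 3^{\alpha}$, the possibility $H(n) \geq m+1$ is excluded, since it would require a prime power $p^{\beta}||n$ with $\beta \geq 2\up\up m = 2^{\alpha}$, forcing $n \geq 2^{2^{\alpha}} > 3^{\alpha}$ (using $2^{\alpha} > \alpha\log_{2}3$ for $\alpha \geq 4$). Therefore $H(n) \leq m$ on this range, and as the characterization above shows $H(n) = m$ only for $n = 2^{\alpha} a$ with $a < 1.5^{\alpha}$ odd, any other $n < 3^{\alpha}$ satisfies $H(n) \leq m-1$. The only nontrivial step is the three-case analysis bounding $\beta$, so that will be the main obstacle; everything else is a direct reading of the definition.
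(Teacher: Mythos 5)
Your proof is correct, and it reaches the same final structure (show the height-achieving prime is $2$, show the exponent is exactly $\alpha$, read off the form of $n$), but the method of pinning down the exponent is genuinely different from the paper's. The paper unrolls the entire height-$m$ chain, writing $n = a_1 p_1^{a_2 p_2^{\cdots^{a_m p_m}}}$ with $H(a_i) \leq m - i + 1$ and $p_i \nmid a_i$, and then argues level by level ($p_1 = 2$; then $p_2 = \cdots = p_m = 2$; then $a_2 = \cdots = a_m = 1$) using the monotone substitution inequality $2^{3^a} \geq 3^{2^a}$ iterated down the tower. Your argument only descends two levels: take $p^{\beta}\|n$ with $H(\beta) = m-1$, conclude $p = 2$ and $\beta \geq \alpha$, then take $q^{c}\|\beta$ with $H(c) = m - 2$ and force $q = 2$, $c = 2\uparrow\uparrow(m-2)$, and $\beta = \alpha$ by a three-way case split. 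The paper's version is more uniform but requires tracking the nested tower; yours is more local but needs the slightly sharper numerical estimate $\alpha^{\log_2 3 - 1} > \log_2 3$ for $\alpha \geq 4$ (all three of your cases check out, including the parity argument $\beta/\alpha$ odd $\Rightarrow \beta \geq 3\alpha$ in Case 1). The treatment of the \emph{moreover} clause — bounding $H(n) \leq m$ via $n < 3^{\alpha} < 2^{2^{\alpha}} = 2\uparrow\uparrow(m+1)$ — is the same in both.
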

\begin{proof}
	Let $H(n) = m$, then $n$ has the form
	\begin{equation}
		n = a_1 p_1^{{a_2p_2}^{{\ldots}^{a_mp_m}}},
	\end{equation}
	where $p_1, p_2, \ldots, p_m$ are primes, $H(a_1) \leqslant m$, $H(a_2) \leqslant m-1$, $\ldots$, $H(a_m) \leqslant 1$ and $p_1 \nmid a_1$, $p_2 \nmid a_2$, $\ldots$, $p_m \nmid a_m$.
	If $p_1 \geqslant 3$, then replacing $p_2, \ldots, p_m$ with $2$ and $a_1, \ldots, a_m$ with $1$, we get
	\begin{equation}\label{contr}
		n \geqslant 3^\alpha,
	\end{equation}
	which is false due to the assumption, hence $p_1=2$.
	
	Suppose that the set $\left\{p_2, \ldots, p_m\right\} $ contains some $p_k \geqslant 3$. Then we replace $p_k$ with $3$, and the remaining primes with $2$. If $k \leqslant m-1$, then successively applying the inequality
	\begin{equation}\label{ineq1}
		2^{3^a} \geqslant 3^{2^a}, \ a \geqslant 2,
	\end{equation}
	we obtain the contradictory inequality \eqref{contr}. If $k=m$, then applying the inequality $2^{2^3} \geqslant 3^{2^2}$ and then successively inequality \eqref{ineq1}, we again get \eqref{contr}. So, $p_1 = p_2 = \ldots = p_m = 2$.
	
	Suppose that the set $\left\{a_2, \ldots, a_m \right\}$ contains some $a_k \geqslant 2$. Then we replace $a_k$ with $2$, and the remaining $a_i$ with $1$. Then applying the inequality $2^{2a} \geqslant 3^a$, $a \geqslant 1$, and then inequality \eqref{ineq1} a finite number of steps , we get \eqref{contr}. So, $a_2 = \ldots = a_m=1$, and $n$ has the form
	\begin{equation}\label{represent}
		n = a_1 2^\alpha,
	\end{equation}
	where $a_1$ is odd. Since $n < 3^\alpha < 2^{2^\alpha}$, then $H(n) \leqslant m$. So, all numbers of the form \eqref{represent} and only these numbers have the height $m$. Hence, if $n \ne 2^\alpha a$, where $a$ is odd, then $H(n) \leqslant m-1$, as required.
\end{proof}
	\begin{lem}\label{L4}
		Let $Y \geqslant 20$ and $\alpha \geqslant 2$, then we have
		\[
		\sum\limits_{\substack{d \geqslant Y \\ p \mid d \Rightarrow p^\alpha \mid d}} \dfrac{1}{d} \leqslant \dfrac{3\kappa^\alpha \log Y}{Y^{1-\frac{1}{\alpha}}},
		\]
		and \[
		\sum\limits_{\substack{d \geqslant Y \\ p \mid d \Rightarrow p^\alpha \mid d}} \dfrac{1}{\varphi(d)} \leqslant \dfrac{13\kappa^ \alpha \log Y \log \log Y}{Y^{1-\frac{1}{\alpha}}},
		\]
		where
		\begin{equation}\label{kappa}
			\kappa = \exp \left( \sum\limits_p \frac{1}{p \log p} \right) \leqslant 5.5.
		\end{equation}
	\end{lem}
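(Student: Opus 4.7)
The plan is to decompose each $d$ satisfying $p \mid d \Rightarrow p^\alpha \mid d$ uniquely as $d = q^\alpha e$ where $q = \mathrm{rad}(d)$ is squarefree and $\mathrm{rad}(e) \mid q$, and then apply Rankin's trick to the inner sum over $e$. This rewrites
\[
\sum_{\substack{d \geqslant Y \\ p \mid d \Rightarrow p^\alpha \mid d}} \frac{1}{d} = \sum_{q\ \text{sqfree}} \frac{\mu^2(q)}{q^\alpha} \sum_{\substack{\mathrm{rad}(e) \mid q \\ e \geqslant Y/q^\alpha}} \frac{1}{e}.
\]
Using $\mathbb{1}_{e \geqslant M} \leqslant (e/M)^{1 - 1/\alpha}$ with $M = Y/q^\alpha$, the inner sum is bounded by $Y^{-(1 - 1/\alpha)} q^{\alpha - 1} \prod_{p \mid q}(1 - p^{-1/\alpha})^{-1}$. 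On the range $q \leqslant Y^{1/\alpha}$ the $q^{\alpha-1}$ cancels against $1/q^\alpha$, leaving a multiplicative sum over squarefree $q$. On the tail $q > Y^{1/\alpha}$ the inner sum is trivially $\leqslant q/\varphi(q) \ll \log \log q$, and summing against $q^{-\alpha}$ produces only $O((\log \log Y)\, Y^{-(1 - 1/\alpha)})$, harmlessly absorbed.

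Matters thus reduce to bounding the Euler product $\prod_{p \leqslant Y^{1/\alpha}}(1 + (p(1 - p^{-1/\alpha}))^{-1})$ by $\kappa^\alpha \log Y$ up to an absolute constant. For this I would invoke the elementary inequality $(1 - e^{-u})^{-1} \leqslant u^{-1} + 1$ for $u > 0$ (equivalent, after clearing denominators, to $e^u \geqslant 1 + u$) applied with $u = (\log p)/\alpha$, giving $(1 - p^{-1/\alpha})^{-1} \leqslant \alpha/\log p + 1$. Then $\log(1 + x) \leqslant x$ combined with Mertens' theorem yields
\[
\log \prod_{p \leqslant Y^{1/\alpha}} \left(1 + \frac{1}{p(1 - p^{-1/\alpha})}\right) \leqslant \alpha \sum_p \frac{1}{p \log p} + \sum_{p \leqslant Y^{1/\alpha}} \frac{1}{p} = \log \kappa^\alpha + \log \log Y + O(1),
\]
using precisely the convergence of $\sum_p 1/(p \log p) = \log \kappa$. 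Exponentiating and tracking constants then gives the first inequality.

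For the second inequality I would factor $1/\varphi(d) = (q/\varphi(q)) \cdot (1/d)$ with $q = \mathrm{rad}(d)$, and use $q/\varphi(q) \ll \log \log q \ll \log \log Y$ on the main range $q \leqslant Y^{1/\alpha}$ (the tail $q > Y^{1/\alpha}$ is treated as above, picking up a second factor of $\log \log q$, still absorbable). This extracts a single $\log \log Y$ factor in front and reduces the estimate to the first inequality, producing the second with the constant $13$ accommodating the loss. The main obstacle will be the careful tracking of absolute constants to reach the explicit numbers $3$ and $13$; conceptually the argument is routine once one notices that Rankin's trick applied to the restricted inner series $\sum_{\mathrm{rad}(e) \mid q} e^{-1/\alpha}$ is precisely the device that matches the $\alpha$-dependence of the Euler factors against the convergent series $\sum_p 1/(p \log p) = \log \kappa$ defining $\kappa$.
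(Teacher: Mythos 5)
Your argument is correct in outline, and it is a genuinely different route from the paper's. The paper applies Rankin's trick once, directly to the $d$-sum, writing $\sum \frac{1}{d} \leq Y^{-(1-\varepsilon)} \prod_p(1 + p^{-\alpha\varepsilon} + p^{-(\alpha+1)\varepsilon} + \cdots)$ for a free parameter $\varepsilon$, then extracting $\zeta(\alpha\varepsilon)$ as an explicit factor, setting $\alpha\varepsilon = 1 + 1/\log Y$, and obtaining the $\log Y$ from $\zeta(1+\delta) \leq 1 + 1/\delta$; the factor $\kappa^\alpha$ comes from the remaining Euler product via the same elementary inequality $p^{1/\alpha}-1 \geq (\log p)/\alpha$ that underlies your $(1-p^{-1/\alpha})^{-1} \leq \alpha/\log p + 1$. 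Your version instead factorizes $d = q^\alpha e$ with $q = \mathrm{rad}(d)$, applies Rankin only to the inner $e$-sum with the canonical exponent $1-1/\alpha$ (so that $q^{\alpha-1}$ exactly cancels $q^{-\alpha}$ to leave $q^{-1}$), bounds the surviving $q$-sum by a truncated Euler product, and imports the $\log Y$ from Mertens' theorem for $\sum_{p\leq Y^{1/\alpha}} 1/p$. This buys a cleaner conceptual picture (no free parameter, no zeta function, a transparent source for each factor), at the cost of a tail term $q > Y^{1/\alpha}$ to dispatch separately and a slightly weaker bound in $\log\log Y$ from the tail, both of which you handle correctly. The one substantive gap you leave, as you note yourself, is tracking the absolute constants down to the explicit $3$ and $13$ of the statement; your inequality $(1-e^{-u})^{-1} \leq u^{-1}+1$ (equivalent to $e^u \geq 1+u$) is the right tool for that, but the bookkeeping is not carried out. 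For the $\varphi$-sum, your factorization $1/\varphi(d) = (q/\varphi(q))\cdot(1/d)$ is exact and the extraction of a single $\log\log Y$ from $q/\varphi(q) \ll \log\log q$ on the main range is correct; the tail picks up $(\log\log q)^2$ which you rightly observe is still absorbable.
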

\begin{proof}
	For arbitrary $1/\alpha < \varepsilon < 1$ we have
	\begin{multline}
		\sum\limits_{\substack{d \geqslant Y \\ p \mid d \Rightarrow p^\alpha \mid d}} \dfrac{1}{d} \leqslant \dfrac{1}{Y^{1-\varepsilon}} \prod \limits_p \left(1 + \dfrac{1}{p^{\alpha \varepsilon}} + \dfrac{1}{p^{(\alpha +1)\varepsilon}} + \ldots \right) = \dfrac{1}{Y^{1-\varepsilon}} \prod \limits_p \left(1 + \dfrac{1}{p^{\alpha \varepsilon}} \dfrac{1}{1 - p^{-\varepsilon}}\right)\\
		= \dfrac{\zeta(\alpha \varepsilon)}{Y^{1-\varepsilon}} \prod \limits_p \left( 1 + \dfrac{1}{p^{(1 + \alpha) \varepsilon} - p^{\alpha \varepsilon}} - \dfrac{1}{p^{2\alpha \varepsilon} - p^{(2\alpha-1)\varepsilon}}\right) 
		\\
		\leqslant \dfrac{\zeta(\alpha \varepsilon)}{Y^{1-\varepsilon}} \prod \limits_p \left( 1 + \dfrac{1}{p^{(1 + \alpha) \varepsilon} - p^{\alpha \varepsilon}} \right),
	\end{multline}
	where $\zeta(s)$ denotes the Riemann zeta function. Let $\alpha \varepsilon = 1 + \delta$, where $0 < \delta < 1$, then
	\begin{multline}
		\sum\limits_{\substack{d \geqslant Y \\ p \mid d \Rightarrow p^\alpha \mid d}} \dfrac{1}{d} \leqslant \dfrac{\zeta(1+\delta)Y^{\frac{\delta}{\alpha}}}{Y^{1-\frac{1}{\alpha}}} \prod \limits_p \left(1 + \dfrac{1}{p^\delta \left( p^{1 + \frac{1}{\alpha} + \frac{\delta}{\alpha}} - p \right)}    \right)\\
		\leqslant \dfrac{\zeta(1+\delta)Y^{\frac{\delta}{\alpha}}}{Y^{1-\frac{1}{\alpha}}} \prod \limits_p \left(1 + \dfrac{1}{ p^{1 + \frac{1}{\alpha}} - p }    \right).
	\end{multline}
	Setting $\delta = {1}/{\log Y}$ and using the inequalities $\zeta(1+\delta) \leqslant 1 + \frac{1}{\delta}$ and \mbox{$p^{ {1}/{\alpha}} - 1 \geqslant \frac{\log p}{\alpha}$}, we obtain
	\[
	\sum\limits_{\substack{d \geqslant Y \\ p \mid d \Rightarrow p^\alpha \mid d}} \dfrac{1}{d} \leqslant \dfrac{(1 + \log Y)e^{\frac{1}{\alpha}} }{Y^{1-\frac{1}{\alpha}}}\exp \left( \alpha \sum\limits_p \dfrac{1}{p \log p}  \right) \leqslant \dfrac{3\kappa^\alpha \log Y}{Y^{1-\frac{1}{\alpha}}}.
	\]
	Let us proceed to the proof of the second inequality. Since (see \cite{Ros})
	\[
	\varphi(d) > \dfrac{d}{e^\gamma \log \log d + \frac{3}{\log \log d}},
	\]
	for $d \geqslant 3$, then for $d \geqslant 20$ we have \[
	\varphi(d) \geqslant \dfrac{3d}{13 \log \log d}.
	\]
	Put \[	
	\varepsilon = \dfrac{1}{\alpha} \left( 1 + \dfrac{1}{\log Y} \right),
	\]
	then $0<\varepsilon<1/2$ and the function $f(Y) = \dfrac{\log \log Y}{Y^{1-\varepsilon}}$ decreases as $Y \geqslant 20$; hence we obtain
	\begin{multline}
		\sum\limits_{\substack{d \geqslant Y \\ p \mid d \Rightarrow p^\alpha \mid d}} \dfrac{1}{\varphi(d)} \leqslant \dfrac{13}{3} 	\sum\limits_{\substack{d \geqslant Y \\ p \mid d \Rightarrow p^\alpha \mid d}}  \dfrac{\log \log d}{d}\\ \leqslant \dfrac{13 \log \log Y}{3 Y^{1-\varepsilon}} \prod\limits_p \left(1 + \dfrac{1}{p^{\alpha \varepsilon}} + \dfrac{1}{p^{(1 + \alpha) \varepsilon}} + \ldots \right)
		\leqslant \dfrac{13 \kappa^\alpha \log Y \log \log Y}{Y^{1-\frac{1}{\alpha}}}.
	\end{multline}
	The lemma is proved.
\end{proof}
	\begin{lem}\label{L5}
		Let
		\begin{equation}\label{Sa}
			S_\alpha(Y) = \# \left\{n \leqslant Y: p \mid n \Rightarrow p^\alpha \mid n \right\},
		\end{equation}
		then uniformly for $\alpha \geqslant 2$ we have
		\[
		S_\alpha(Y) \ll \alpha \kappa^\alpha Y^{\frac{1}{\alpha}} \log Y,
		\]
		where $\kappa$ is defined in \eqref{kappa}.
	\end{lem}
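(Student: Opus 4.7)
My plan is to deduce Lemma~\ref{L5} from Lemma~\ref{L4} by a dyadic iteration argument, rather than redoing the Rankin-style analysis from scratch. The set $A_\alpha = \{n : p \mid n \Rightarrow p^\alpha \mid n\}$ has the useful feature that its smallest non-unit element is $2^\alpha$, so there is no ``tail'' to worry about once we descend far enough.

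First, I would establish a local (dyadic) increment estimate. For any $Y_0 \geq 20$, every $n \in A_\alpha$ with $Y_0 < n \leq 2Y_0$ satisfies $1/n \geq 1/(2Y_0)$, and hence
\[
S_\alpha(2Y_0) - S_\alpha(Y_0) \;=\; \#\bigl\{Y_0 < n \leq 2Y_0 : n \in A_\alpha\bigr\} \;\leq\; 2Y_0\!\!\sum_{\substack{d \geq Y_0 \\ p \mid d \Rightarrow p^\alpha \mid d}}\!\!\frac{1}{d}.
\]
Applying Lemma~\ref{L4} to the inner sum bounds the right-hand side by $6\kappa^\alpha\,Y_0^{1/\alpha}\log Y_0$.

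Next, I would iterate dyadically. Set $Y_k = Y/2^k$ and let $K$ be the least index with $Y_K < \max(2^\alpha,20)$; at that point $S_\alpha(Y_K) \leq 1$ (only $n=1$ belongs to $A_\alpha$ below $2^\alpha$). Telescoping and using $\log Y_k \leq \log Y$ and $Y_k^{1/\alpha} = Y^{1/\alpha}\,2^{-k/\alpha}$,
\[
S_\alpha(Y) \;\leq\; S_\alpha(Y_K) + \sum_{k=1}^{K}\bigl(S_\alpha(Y_{k-1}) - S_\alpha(Y_k)\bigr)
\;\leq\; 1 + 6\kappa^\alpha\,Y^{1/\alpha}\log Y \sum_{k=1}^{K} 2^{-k/\alpha}.
\]
The final ingredient is the geometric-series estimate
\[
\sum_{k=1}^{K} 2^{-k/\alpha} \;\leq\; \frac{1}{2^{1/\alpha}-1} \;\leq\; \frac{\alpha}{\log 2},
\]
where the last inequality uses $2^{1/\alpha} = \er^{(\log 2)/\alpha} \geq 1 + (\log 2)/\alpha$. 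Combining these pieces yields $S_\alpha(Y) \ll \alpha\kappa^\alpha Y^{1/\alpha}\log Y$, uniformly in $\alpha \geq 2$.

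The only real obstacle is handling the trivial regime where the hypothesis $Y_0 \geq 20$ of Lemma~\ref{L4} fails, or where $Y < 2^\alpha$; in both situations the conclusion is immediate since $S_\alpha(Y)$ is essentially $1$. The factor $\alpha$ in the stated bound is precisely the price paid by the geometric series when $\alpha$ is large — a direct Rankin's trick argument in the style of Lemma~\ref{L4} would in fact avoid this factor, but the dyadic reduction above has the virtue of using Lemma~\ref{L4} as a black box.
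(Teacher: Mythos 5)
Your argument is correct and is essentially the same as the paper's: both proofs decompose the range $[1,Y]$ dyadically, bound the count in each dyadic block $\bigl(Y/2^{\nu+1}, Y/2^{\nu}\bigr]$ by $\tfrac{Y}{2^{\nu}}\sum_{n > Y/2^{\nu+1},\ n\in A_\alpha} 1/n$, invoke Lemma~\ref{L4}, and then sum the geometric series $\sum_{\nu\geq 0} 2^{-\nu/\alpha} \ll \alpha$, which is the source of the $\alpha$ factor. The only difference is presentational: you package the dyadic split as a telescoping sum $S_\alpha(Y) = S_\alpha(Y_K) + \sum_k\bigl(S_\alpha(Y_{k-1}) - S_\alpha(Y_k)\bigr)$, while the paper writes it directly as a double sum $\sum_\nu \sum_{Y/2^{\nu+1} < n \leq Y/2^\nu}$; the underlying estimate is identical. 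Your closing remark is also accurate: a direct application of Rankin's trick to $S_\alpha$ itself (rather than to the tail sum of $1/d$) would indeed give $S_\alpha(Y) \ll \kappa^\alpha Y^{1/\alpha}\log Y$ without the extra factor of $\alpha$, but the black-box reduction to Lemma~\ref{L4} is exactly the route the paper takes as well.
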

	
	\begin{proof}
		We have
		\begin{equation}
			S_\alpha(Y) = \sum\limits_{\substack{\nu \geqslant 0 \\ \frac{Y}{2^{\nu+1}} \geqslant 20}} \sum \limits_{\substack{ \frac{Y}{2^{\nu+1}} < n \leqslant \frac{Y}{2^{\nu}} \\ p \mid n \Rightarrow p^\alpha \mid n }} 1 +O(1)
			\leqslant Y\sum\limits_{\substack{\nu \geqslant 0 \\ \frac{Y}{2^{\nu+1}} \geqslant 20}}\dfrac{1}{2^\nu} \sum \limits_{\substack{n > \frac{Y}{2^{\nu+1}} \\ p \mid n \Rightarrow p^\alpha \mid n }} \dfrac{1}{n} + O(1).
		\end{equation}
		Using Lemma \ref{L4}, we obtain
		\begin{equation}
			S_\alpha(Y) \ll Y \sum\limits_{\substack{\nu \geqslant 0 \\ \frac{Y}{2^{\nu+1}} \geqslant 20}} \dfrac{1}{ 2^\nu} \dfrac{\left( \log {Y}/{2^{\nu + 1}}\right)\kappa^\alpha}{\left( {Y}/{2^{\nu + 1}} \right)^{1-\frac{1}{\alpha}} }
			\ll Y^{\frac{1}{\alpha}} \log Y \kappa^{\alpha} \sum\limits_{\nu \geqslant 0} 2^{-\frac{\nu}{\alpha} } \ll \alpha \kappa^\alpha Y^{\frac{1}{\alpha}} \log Y.
		\end{equation}
		The proof of the lemma is complete.
	\end{proof}
	\begin{lem}\label{L6}
		For $m \geqslant 4$ equality
		\[
		\mathcal{D}_m = \dfrac{1}{2} \times \dfrac{1}{2 \uparrow \uparrow m} + O\left( \dfrac{1}{3^{2 \uparrow \uparrow ( m-1)}}\right)
		\]
		 holds, where the constant in the symbol $O$ does not exceed $3$.
	\end{lem}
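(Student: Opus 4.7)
The plan is to apply Lemma~\ref{L2} and analyze the Euler product for $D_m$, isolating the contribution of the prime $p=2$ as the main term. Setting $\alpha = 2\up\up(m-1)$, the claim becomes $\mathcal{D}_m = 1/2^{\alpha+1} + O(1/3^\alpha)$, which matches the density of numbers with $v_2(n) = \alpha$ exactly.

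Using \eqref{f_{m-1}} and a shift-of-index, the inner sum in the Euler product of Lemma~\ref{L2} telescopes to yield the clean identity
\[
1 + \sum_{\beta \geq 1}\frac{f_{m-1}(p^\beta) - f_{m-1}(p^{\beta - 1})}{p^\beta} \;=\; 1 - \left(1 - \frac{1}{p}\right)B_m(p), \qquad B_m(p) := \sum_{\substack{\beta \geq 1 \\ H(\beta) \geq m-1}}\frac{1}{p^\beta},
\]
so that $1 - D_m = \prod_p \bigl(1 - (1 - 1/p)B_m(p)\bigr)$. By Lemma~\ref{L1} the smallest $\beta$ in the support of $B_m$ is $\beta = \alpha$; by Lemma~\ref{L3} applied with $m-1$ in place of $m$ (valid since $m \geq 4$ gives $m-1 \geq 3$), the remaining relevant $\beta$'s with $H(\beta) = m-1$ are $3\alpha, 5\alpha, \ldots$, while the tail $\beta$'s with $H(\beta) \geq m$ begin only at $\beta = 2^\alpha$. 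Hence $B_m(p) = 1/p^\alpha + O(1/p^{3\alpha})$ with an absolute implied constant.

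Isolating the factor at $p = 2$ gives the main term $(1/2)B_m(2) = 1/2^{\alpha+1} + O(1/2^{3\alpha+1})$. For $p \geq 3$ I would bound
\[
\Sigma := \sum_{p \geq 3}(1 - 1/p)B_m(p) \;\leq\; \sum_{p \geq 3}\frac{1}{p^\alpha}\bigl(1 + O(1/p^{2\alpha})\bigr);
\]
since $\alpha \geq 16$ for $m \geq 4$, the sum is dominated by the $p=3$ term and $\Sigma \leq (1 + o(1))/3^\alpha$. The Bonferroni-type inequality $\bigl|1 - \prod_{p \geq 3}(1 - x_p)\bigr| \leq \sum_p x_p$ then yields $|D_m - 1/2^{\alpha+1}| \leq \Sigma + O(1/2^{3\alpha}) \leq (1 + o(1))/3^\alpha$, comfortably below $3/3^\alpha$.

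To conclude, $\mathcal{D}_m = D_m - D_{m+1}$, and Lemma~\ref{L2} gives $D_{m+1} \leq 4/2^{2^\alpha}$, which is astronomically smaller than $1/3^\alpha$ and contributes nothing to the error. The main obstacle is the quantitative bookkeeping: to lock the explicit $O$-constant at most $3$, one must verify absolute inequalities such as $\sum_{p \geq 5} p^{-\alpha} \leq 10^{-3}/3^\alpha$ for $\alpha \geq 16$ and carefully track the Bonferroni slack, but once these routine estimates are in place the rest of the argument is a short computation.
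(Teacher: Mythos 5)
Your proposal is correct and follows essentially the same route as the paper's proof: write $\mathcal{D}_m = D_m - D_{m+1}$ with $D_{m+1}$ negligible via Lemma~\ref{L2}, expand $D_m$ as an Euler product, use Lemma~\ref{L3} (applied at $m-1$) to pin down the support of the local factor, isolate $p=2$ to extract the main term $1/2^{\alpha+1}$, and bound the contribution of $p\geqslant 3$ by $O(3^{-\alpha})$. The only difference is cosmetic (your $B_m(p)$ reformulation and Bonferroni bound versus the paper's two-sided inequalities on $\vartheta_p(m)$), and you correctly flag—but do not carry out—the explicit bookkeeping needed to lock the $O$-constant at $3$, which the paper does via the explicit $\theta_1,\theta_2,\theta_3$ bounds.
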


\begin{proof}
	We have
	\[
	\mathcal{D}_m = D_m - D_{m+1} = D_m + \dfrac{\theta_1}{2^{2 \uparrow \uparrow m}},
	\]
	where $\theta_1 = \theta_1(m)$ and $-4 \leqslant \theta_1 \leqslant 0$. Further, we have
	\[
	D_m = 1 - \prod\limits_p \vartheta_p(m),
	\]
	where $\vartheta_p(m) = 1 + \sum_{\nu \geq 1} {\mathcal{F}_{m-1}(p^\nu)}/{p^\nu}$.
	Set $\alpha = 2 \uparrow \uparrow (m-1)$. Then, according to Lemma \ref{L3}, the equality $H(\nu) = m-1$ holds for $\nu < 3^{2 \uparrow \uparrow (m-2)}$ if and only if $\nu $ has the form $\nu = q\alpha$, where $q < 1.5^{2 \uparrow \uparrow (m-2)}$ is odd. Hence we get
	\[
	\mathcal{F}_{m-1}(p^\nu) = f_{m-2}(\nu) - f_{m-2}(\nu-1) = \begin{cases}
		-1, \ \nu = q\alpha, \\
		+1, \ \nu = q \alpha + 1, \\
		0, \ \nu \ne q\alpha \text{, and}\  \nu \ne q\alpha + 1.		
	\end{cases}
	\]
	Since $\mathcal{F}_m(n) \in \left\{-1, 0, + 1\right\}$, it follows that \begin{equation}\label{in1}
		1 - \dfrac{1}{p^\alpha} + \dfrac{1}{p^{\alpha + 1}} - \dfrac{1}{p^{3\alpha}} \leqslant \vartheta_p(m) \leqslant 1 - \dfrac{1}{p^\alpha} + \dfrac{1}{p^{\alpha + 1}},
	\end{equation}
	and \begin{equation}\label{in2}
		1 - \dfrac{1}{p^\alpha} \leqslant \vartheta_p(m) \leqslant 1.
	\end{equation}
	Let us estimate the product $\prod_p \vartheta_p(m)$. For this, we apply \eqref{in1} for $p=2$ and \eqref{in2} for $p \geqslant 3$. We have
	\[
	\vartheta_2(m) = 1 - \dfrac{1}{2^{\alpha+1}} + \dfrac{\theta_2}{8^\alpha}, \ \ \ -1 \leqslant \theta_2 \leqslant 0,
	\]
	and
	\[
	1 \geqslant \prod \limits_{p \geqslant 3} \vartheta_p(m) \geqslant \prod \limits_{p \geqslant 3} \left(1 - \dfrac{1}{p^\alpha}\right) = \sum\limits_{\substack{d=1 \\ p\mid d \Rightarrow p \geqslant 3}}^{+\infty} \dfrac{\mu(d)}{d^\alpha} \geqslant 1 - \dfrac{1}{3^\alpha} - \int\limits_3^{+\infty} \dfrac{dt}{t^\alpha} \geqslant 1 - \dfrac{2}{3^\alpha},
	\]
	which is equivalent to
	\[
	\prod\limits_{p \geqslant 3} \vartheta_p(m) = 1 + \dfrac{\theta_3}{3^\alpha}, \ \ \ -2 \leqslant \theta_3 \leqslant 0.
	\]
	
	Thus, for $\mathcal{D}_m$ we get
	\begin{multline}
		\mathcal{D}_m = 1 - \left(1 - \dfrac{1}{2^{\alpha+1}} + \dfrac{\theta_2}{8^\alpha} \right) \left( 1 + \dfrac{\theta_3}{3^\alpha}\right) + \dfrac{\theta_1}{2^{2^\alpha}}\\
		= \dfrac{1}{2^{\alpha+1}} - \dfrac{\theta_2}{3^\alpha} + \dfrac{\theta_2}{2 \cdot 6^\alpha} - \dfrac{\theta_3}{8^\alpha} - \dfrac{\theta_3\theta_2}{24^\alpha} + \dfrac{\theta_1}{2^{2^\alpha}}.
	\end{multline}
	Hence
	\[
	\mathcal{D}_m \leqslant \dfrac{1}{2^{\alpha+1}} + \dfrac{1}{3^\alpha} \left(2 + \dfrac{3^\alpha}{8^\alpha}\right) \leqslant \dfrac{1}{2^{\alpha+1}} + \dfrac{3}{3^\alpha}
	\]
	and\[
	\mathcal{D}_m \geqslant \dfrac{1}{2^{\alpha+1}} - \dfrac{1}{6^\alpha} - \dfrac{2}{24^\alpha} - \dfrac{4}{2^{2^\alpha}} \geqslant \dfrac{1}{2^{\alpha+1}} - \dfrac{3}{3^\alpha}.
	\]
The lemma is proved.
\end{proof}

\begin{lem}\label{L7} Let $A\geqslant 2$ be an arbitrary fixed number, $m\geqslant 1$, then we have
	\[
	\# \left\{p \leqslant x: H(p-1) \leqslant m \right\} = k_m \pi(x) + O_A\left(\dfrac{x}{(\log x)^A} \right),
	\]
	where $k_m = \sum\limits_{d=1}^{+\infty} {\mathcal{F}_m(d)}/{\varphi(d)}$, and the constant in the symbol $O$ depends only on $A.$
\end{lem}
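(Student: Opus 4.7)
The plan is to invert the counting problem by identity \eqref{MainIdent} and reduce it to counting primes in arithmetic progressions: small moduli are handled by Siegel--Walfisz, intermediate moduli by Brun--Titchmarsh, and large moduli by a trivial bound. Applying \eqref{MainIdent} and swapping the order of summation gives
\[
\#\left\{p \leq x : H(p-1) \leq m\right\} = \sum_{d < x} \mathcal{F}_m(d)\,\pi(x;d,1),
\]
where $\pi(x;d,1) = \#\{p \leq x : p \equiv 1 \pmod d\}$. By Lemma \ref{L1}, one has $|\mathcal{F}_m(d)| \leq 1$, and the support consists of $d = 1$ together with squareful $d$ (since $2\up\up m \geq 2$). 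I would split at $Y = (\log x)^B$ with $B = B(A)$ chosen sufficiently large.

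For $d \leq Y$, the Siegel--Walfisz theorem yields $\pi(x;d,1) = \pi(x)/\varphi(d) + O_B(x\exp(-c\sqrt{\log x}))$, contributing the main term $\pi(x)\sum_{d \leq Y} \mathcal{F}_m(d)/\varphi(d)$ plus an error $\ll_A x/(\log x)^A$. To complete the partial sum to $k_m$, I would invoke Lemma \ref{L4} with $\alpha = 2$:
\[
\Bigl| \sum_{d > Y} \frac{\mathcal{F}_m(d)}{\varphi(d)} \Bigr| \leq \sum_{\substack{d > Y \\ p \mid d \Rightarrow p^2 \mid d}} \frac{1}{\varphi(d)} \ll \frac{\log Y \log \log Y}{Y^{1/2}},
\]
with absolute implied constant; multiplied by $\pi(x)$ this is $\ll_A x/(\log x)^A$ for $B \geq 2A + 4$.

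For the tail $Y < d < x$, I would apply Brun--Titchmarsh, $\pi(x;d,1) \ll x/(\varphi(d)\log(2x/d))$, and split at $W = x/(\log x)^{A+2}$. On $Y < d \leq W$ the log-factor is $\gg \log\log x$, and Lemma \ref{L4} again controls $\sum 1/\varphi(d)$ to give a contribution $\ll_A x/(\log x)^A$. On $W < d < x$ I would fall back on the trivial bound $\pi(x;d,1) \leq x/d + 1$: the $x/d$ part is controlled once more by Lemma \ref{L4} applied at scale $W$ (with $\alpha = 2$), while the $+1$ part is majorized by $S_2(x) \ll x^{1/2}\log x$ from Lemma \ref{L5}; both are negligible.

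The main obstacle is the uniformity of the implied constant in $m$: a direct use of Lemmas \ref{L4} and \ref{L5} with $\alpha = 2\up\up m$ introduces a factor $\kappa^\alpha$ that grows tetrationally in $m$. The key observation rescuing uniformity is that $\alpha \geq 2$ holds unconditionally, so every support-point of $\mathcal{F}_m$ is already squareful, and using Lemmas \ref{L4} and \ref{L5} with the weaker parameter $\alpha = 2$ throughout is enough to obtain all required bounds with absolute, $m$-independent, constants.
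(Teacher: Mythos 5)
Your proof is correct and reaches the conclusion with the required $m$-uniformity, but it takes a genuinely different route from the paper. The paper takes $Y=x^{1/3}$ and disposes of the error $\sum_{d\leq Y}|R(x;d,1)|$ in a single stroke via the Bombieri--Vinogradov theorem; the tail $d>Y$ is then handled trivially, and the completion of $\sum_{d\leq Y}\mathcal{F}_m(d)/\varphi(d)$ to $k_m$ costs a power of $x$ because $Y$ is so large. You instead take the small cutoff $Y=(\log x)^B$, pay for the tiny range $d\leq Y$ with Siegel--Walfisz, and must therefore control a much longer tail $d>Y$; you do this with Brun--Titchmarsh on $(Y,W]$ and a trivial bound on $(W,x)$. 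Both proofs hinge on the same crucial observation, which you state explicitly: the support of $\mathcal{F}_m$ forces $p\mid d\Rightarrow p^{2\uparrow\uparrow m}\mid d$, so one may weaken to $p^2\mid d$ and apply Lemmas \ref{L4}--\ref{L5} with $\alpha=2$, obtaining absolute constants rather than ones blowing up like $\kappa^{2\uparrow\uparrow m}$.

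One small remark on economy: the Brun--Titchmarsh step in your middle range is actually superfluous. Once you have truncated at $Y=(\log x)^B$, the trivial bound $\pi(x;d,1)\leq x/d$ applied to the entire squarefull tail $d>Y$ already gives, via Lemma \ref{L4} with $\alpha=2$,
\[
\sum_{\substack{d>Y\\ p\mid d\Rightarrow p^2\mid d}}\frac{x}{d}\ll \frac{x\log Y}{\sqrt Y}\ll \frac{x\log\log x}{(\log x)^{B/2}}\ll_A \frac{x}{(\log x)^A}
\]
for $B\geq 2A+4$, so no intermediate split at $W$ is needed. That said, your argument is correct as written. The trade-off between the two proofs is essentially one of taste: Bombieri--Vinogradov makes the paper's version shorter and is the standard device for averages of $R(x;d,1)$, while your version stays closer to first principles (Siegel--Walfisz, Brun--Titchmarsh) at the cost of a more intricate range decomposition; both ultimately inherit the same ineffectivity from Siegel's theorem.
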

\begin{proof}
	We have
	$$S:=\# \left\{p \leqslant x: H(p-1) \leqslant m \right\} = \sum\limits_{d \leqslant x-1} \mathcal{F}_m(d ) \pi(x; d, 1).$$
	Put \[
	R(x; q, a) = \pi(x; q, a) - \dfrac{\pi(x)}{\varphi(q)}.
	\]
	Then, using Lemma \ref{L1}, for some $Y \leqslant \sqrt{x}$ we derive
\begin{multline}
	S = \sum\limits_{d \leqslant Y} \mathcal{F}_m(d) \pi(x; d, 1) + O \left(  \sum\limits_{\substack{d > Y,\ p \mid d \Rightarrow p^2 \mid d}} \dfrac{x}{d} \right)\\
	= \pi(x)\sum\limits_{d \leqslant Y} \dfrac{\mathcal{F}_m(d)}{\varphi(d)}+ O \left(\sum\limits_{d \leqslant Y}  |R(x; d, 1)| + \dfrac{x \log Y}{\sqrt{Y}}\right).
\end{multline}
	Complementing the sum over $d$ to infinite sum, we obtain
	\begin{equation}
		S = \pi(x)\sum\limits_{d=1}^{+\infty} \dfrac{\mathcal{F}_m(d)}{\varphi(d)} + O \left(\sum\limits_{d \leqslant Y} |R(x; d, 1)|+ \dfrac{x \log Y}{\sqrt{Y}} + \dfrac{x\log Y \log\log Y}{\sqrt {Y}\log x}\right).
	\end{equation}
	Setting $Y = x^{{1}/{3}}$ and applying the Bombieri-Vinogradov theorem, we conclude the proof of the lemma.
\end{proof}
From the lemma just proved, we trivially obtain the following equality
\begin{equation}\label{dm}
	\# \left\{p \leqslant x: H(p-1) \geqslant m \right\} = d_m \pi(x) + O_A\left(\dfrac{x}{(\log x)^A} \right),
\end{equation}
where $d_1=1$ and
\begin{equation}\label{dmdm}
	d_m = 1 - k_{m-1} = -\sum_{d>1}\dfrac{\mathcal{F}_{m-1}(d)}{\varphi(d)},\ \ m\geq 2.
\end{equation}
\begin{lem}\label{L8}
	Let $k, m \geqslant 1$, then for $H_{k,m}(x)$ defined in \eqref{HHkmx}, we have
	\begin{equation}\label{ckm1}
		H_{k,m}(x) = c_{k, m} x + {R}_{k, m}(x),
	\end{equation}
	where \begin{equation}\label{ckm}
		c_{k, m} = \prod \limits_p \left( 1 + \dfrac{\mathcal{F}_k(p) + \mathcal{F}_m(p)}{p}   + \dfrac{\mathcal{F}_k(p^2) + \mathcal{F}_m(p^2)}{p^2} + \ldots \right),
	\end{equation}
	\begin{equation}\label{Rkm}
		R_{k, m}(x) \ll \min\limits_{k_0 \leqslant k, m_0 \leqslant m} \left(\alpha{\beta} \kappa^{\alpha + \beta} x^{\frac{1}{\alpha}+\frac{1}{\beta} - \frac{1}{\alpha\beta}} (\log x)^2 \right),\, \alpha = 2 \uparrow \uparrow k_0,\, \beta = 2 \uparrow \uparrow m_0,
	\end{equation}
	the constant $\kappa$ is defined in \eqref{kappa},
	and the constant in the symbol $\ll$ is absolute.
\end{lem}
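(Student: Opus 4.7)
The plan is to apply the identity from Lemma \ref{L1} to both $\mathbb{1}_{H(n) \leq k}$ and $\mathbb{1}_{H(n+1) \leq m}$, evaluate the resulting double sum via the Chinese remainder theorem, identify the Euler product $c_{k,m}$ as the main term, and estimate the error using the sparsity of the supports of $\mathcal{F}_k$ and $\mathcal{F}_m$.

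Expanding both indicators and swapping summations gives
\[
H_{k,m}(x) = \sum_{d_1, d_2 \geq 1} \mathcal{F}_k(d_1) \mathcal{F}_m(d_2)\, N(d_1, d_2; x), \qquad N(d_1, d_2; x) = \#\{n \leq x : d_1 \mid n,\, d_2 \mid n+1\}.
\]
The count vanishes unless $(d_1, d_2) = 1$ (a common prime divisor would have to divide both $n$ and $n+1$), and for coprime pairs CRT yields $N(d_1, d_2; x) = x/(d_1 d_2) + O(1)$. Extracting the main term, coprimality forces each prime to appear in at most one of $d_1, d_2$, so the sum factorizes over primes as $\prod_p(1 + \sum_\nu (\mathcal{F}_k(p^\nu) + \mathcal{F}_m(p^\nu))/p^\nu) = c_{k,m}$. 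This yields the main term $c_{k,m} x$ and leaves $R_{k,m}(x)$ controlled by a count error $\#\{(d_1, d_2) \in S_\alpha \times S_\beta : (d_1, d_2) = 1,\, d_1 d_2 \leq x+1\}$ and a tail $x \sum_{d_1 d_2 > x+1,\, d_1 \in S_\alpha,\, d_2 \in S_\beta} 1/(d_1 d_2)$.

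I would then estimate both quantities by hyperbolic summation. For the count error, bound by $\sum_{d_1 \in S_\alpha,\, d_1 \leq x} S_\beta(x/d_1)$ and apply Lemma \ref{L5} to $S_\beta$, followed by partial summation on the outer sum. For the tail, the analogous procedure with Lemma \ref{L4} controls the inner tail $\sum_{d_2 > x/d_1,\, d_2 \in S_\beta} 1/d_2$. Both reduce to estimating $\sum_{d \in S_\alpha,\, d \leq x} d^{-1/\beta}$ and produce bounds of the claimed shape $\alpha\beta\kappa^{\alpha+\beta} x^{1/\alpha + 1/\beta - 1/(\alpha\beta)}(\log x)^2$ after accounting for the factors from Lemmas \ref{L4} and \ref{L5}.

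The minimum in \eqref{Rkm} is obtained from a monotonicity observation: if $k_0 \leq k$ then $\alpha_0 = 2 \uparrow \uparrow k_0 \leq \alpha$, hence $S_\alpha \subseteq S_{\alpha_0}$ and $\supp \mathcal{F}_k \subseteq S_{\alpha_0}$; the analogous containment holds for $m$. Thus the same error estimate remains valid with $(\alpha, \beta)$ replaced by any $(\alpha_0, \beta_0)$, and one takes the smallest bound. The main technical difficulty lies in carrying out the partial-summation estimate for $\sum_{d \in S_\alpha,\, d \leq x} d^{-1/\beta}$ uniformly in $\alpha, \beta$ --- splitting by the sign of $1/\alpha - 1/\beta$ --- so as to keep the polynomial factors $\alpha\beta\kappa^{\alpha+\beta}$ and the logarithmic losses within the stated $(\log x)^2$.
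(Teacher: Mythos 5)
Your overall plan parallels the paper's: expand via \eqref{F_m(d)}, use CRT to get $N(d_1,d_2;x)=x/(d_1d_2)+O(1)$ for coprime pairs, identify the Euler product $c_{k,m}$, bound tails via Lemmas \ref{L4}--\ref{L5}, and obtain the minimum over $(k_0,m_0)$ from the containment $\supp\mathcal{F}_k\subseteq S_{2\uparrow\uparrow k_0}$. The Euler-product identification and the monotonicity step are both correct.

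The gap is in how you account for the pairs $(d_1,d_2)$ with $d_1d_2>x$. Your error budget consists of a \emph{count error} over pairs with $d_1d_2\leqslant x+1$ plus a \emph{tail} $x\sum_{d_1d_2>x+1}1/(d_1d_2)$. But when $d_1d_2>x$, the count $N(d_1,d_2;x)$ need not vanish: it equals $0$ or $1$ depending on whether the CRT residue lies below $x$, so there is a contribution
\[
\sum_{\substack{d_1d_2>x\\(d_1,d_2)=1,\,d_1\in S_\alpha,\,d_2\in S_\beta}}\mathcal{F}_k(d_1)\mathcal{F}_m(d_2)\,N(d_1,d_2;x)
\]
that neither of your two terms covers. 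The only uniform bound you have for each such $N$ is $1$, and the naive count of these pairs is $S_\alpha(x)S_\beta(x)\ll\alpha\beta\kappa^{\alpha+\beta}x^{1/\alpha+1/\beta}(\log x)^2$, which exceeds the stated $x^{1/\alpha+1/\beta-1/(\alpha\beta)}$. The symmetric hyperbola count $\#\{d_1d_2\leqslant x\}$ simply does not see these pairs. The paper avoids this by expanding only $f_k$ initially, truncating $d\leqslant Y$ with $Y=x^{1-1/\beta}$, and for $d>Y$ \emph{re-collapsing} the inner sum: $\sum_{\delta}\mathcal{F}_m(\delta)N(d,\delta;x)=\sum_{n\leqslant x,\,d\mid n}f_m(n+1)\leqslant x/d$, which bounds the entire truncation error by $x\sum_{d>Y,\,d\in S_\alpha}1/d$ without ever needing to count pairs with $d\delta>x$ individually. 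Your approach, having expanded both sides at once, cannot invoke this re-collapse directly; to repair it you would have to reinstate an asymmetric cutoff and reverse the expansion on one side for the large range, at which point you have essentially reproduced the paper's argument. A secondary concern: even in the balanced case $\alpha=\beta$, carrying out $\sum_{d\leqslant T,\,d\in S_\alpha}d^{-1/\alpha}$ by partial summation produces an extra logarithm over $(\log x)^2$, so some care with the cutoff is needed there too; the paper's fixed choice $Y=x^{1-1/\beta}$ sidesteps this cleanly.
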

\begin{proof}
We have
	\begin{multline}
		H_{k,m}(x) = \sum\limits_{n \leqslant x} f_k(n) f_m(n+1) \\= \sum\limits_{n \leqslant x} f_m(n+1) \sum\limits_{d \mid n} \mathcal{F}_k(d)
		= \sum\limits_{d\leqslant x}\mathcal{F}_k(d)\!\!\!\!\! \sum\limits_{\substack{n \leqslant x \\ n \equiv 0 \!\!\pmod d}} \!\!\!\!\! f_m(n+1).
	\end{multline}
	Let us set some $Y \leqslant x$, then we obtain
	\[
	H_{k,m}(x) = \sum\limits_{d \leqslant Y} \mathcal{F}_k(d)  \!\!\!\!\!\sum\limits_{\substack{n \leqslant x \\ n \equiv 0 \!\!\pmod d}} \!\!\!\!\! f_m(n+1) + \mathcal{R}_k^{(1)}(x; Y),
	\]
	where
	\[
	\mathcal{R}_k^{(1)}(x; Y) \ll \!\!\!\! \sum\limits_{\substack{d > Y \\ p \mid d \Rightarrow p^{2 \uparrow \uparrow k} \mid d}}\!\!\!\! \dfrac{x}{d} \leqslant \!\!\!\sum\limits_{\substack{d > Y \\ p \mid d \Rightarrow p^{\alpha} \mid d}} \!\!\!
	\dfrac{x}{d} \ll \dfrac{x (\log Y) \kappa^\alpha}{Y^{1-\frac{1}{\alpha}}},
	\]
	and $\alpha = 2 \uparrow \uparrow k_0$, $1 \leqslant k_0 \leqslant k$. Further, we have
	\begin{multline}
		H_{k,m}(x) = \sum\limits_{d \leqslant Y} \mathcal{F}_k(d) \sum\limits_{\substack{\delta \leqslant x+1 \\ (\delta,d)=1}} \mathcal{F}_m(\delta) \# \left\{n \leqslant x: d \mid n, \delta \mid n+1 \right\} + \mathcal{R}_k^{(1)}(x; Y) \\
		= x \sum\limits_{d \leqslant Y} \dfrac{\mathcal{F}_k(d)}{d} \sum\limits_{\substack{\delta \leqslant x+1 \\ (\delta, d) = 1}} \dfrac{\mathcal{F}_m(\delta)}{\delta} + \mathcal{R}_k^{(1)}(x; Y) + \mathcal{R}_{k, m}^{(2)}(x; Y),
	\end{multline}
	where $\mathcal{R}_{k, m}^{(2)}(x; Y)$ denotes the contribution of the remainder in the formula
	$$\# \left\{n \leqslant x: d \mid n, \delta \mid n+1 \right\} = \dfrac{x}{d\delta} + O(1)$$
	to the above sum.
	By Lemma \ref{L5} we have
	\begin{equation}
		\mathcal{R}_{k, m}^{(2)}(x; Y) \ll S_\alpha(Y)S_\beta(x)
		\ll \alpha \beta \kappa^{\alpha + \beta} Y^{\frac{1}{\alpha}} x^{\frac{1}{\beta}} \log Y \log x,
	\end{equation}
	where $\beta = 2 \uparrow \uparrow m_0, \ 1 \leqslant m_0 \leqslant m$.
	Since
	\[
	\left| x \sum\limits_{d \leqslant Y}  \dfrac{\mathcal{F}_k(d)}{d}   \sum\limits_{\substack{\delta > x+1,\, (\delta, d) = 1}} \dfrac{\mathcal{F}_m(\delta)}{\delta} \right| \ll x \sum\limits_{\substack{\delta > x + 1 \\ p \mid \delta \Rightarrow p^{2 \uparrow \uparrow m} \mid \delta}} \dfrac{1}{\delta}
	\]
	and
	\[
	\left| x \sum\limits_{d > Y}  \dfrac{\mathcal{F}_k(d)}{d}   \sum\limits_{\substack{\delta \geqslant 1,\, (\delta, d) = 1}} \dfrac{\mathcal{F}_m(\delta)}{\delta} \right| \ll x \sum\limits_{\substack{d > Y \\ p \mid d \Rightarrow p^{2 \uparrow \uparrow k} \mid d}} \dfrac{1}{d},
	\]
	it follows that
	\begin{multline}
		H_{k,m}(x) = x \sum\limits_{d=1}^{+\infty}  \dfrac{\mathcal{F}_k(d)}{d}  \sum\limits_{\substack{\delta =1 \\ (\delta, d) = 1}}^{+\infty} \dfrac{\mathcal{F}_m(\delta)}{\delta}\\
		+ O \left( \mathcal{R}_k^{(1)} (x; Y) + \mathcal{R}_{k, m}^{(2)} (x; Y) + \mathcal{R}_m^{(1)} (x; x+1)\right).
	\end{multline}
	Denote the contribution of the terms in the $O$ by the symbol $R_{k, m}(x)$, then
	\begin{equation}\label{contrib}
		R_{k, m}(x)\ll \dfrac{x \log Y \kappa^\alpha}{Y^{1-\frac{1}{\alpha}}} + \alpha \beta \kappa^{\alpha + \beta} Y^{\frac{1}{\alpha}} x^{\frac{1}{\beta}} \log Y \log x +  \kappa^\beta x^{\frac{1}{\beta}}\log x.
	\end{equation}
	Let $Y = x^\delta$, $0 < \delta < 1$, then \eqref{contrib} implies\[
	R_{k, m}(x) \ll \alpha \beta \kappa^{\alpha + \beta} (\log x)^2 \left(x^{1 + \frac{\delta}{\alpha} - \delta} + x^{\frac{\delta}{\alpha} + \frac{1}{\beta}}\right).
	\]
	Assuming $\delta = 1 - {1}/{\beta}$, we conclude the proof of \eqref{ckm1}.
	
	It remains to check equality \eqref{ckm}. We have
	\begin{multline}
		\sum\limits_{d=1}^{+\infty} \dfrac{\mathcal{F}_k(d)}{d} \sum\limits_{\substack{\delta =1 \\ (\delta, d) = 1}}^{+\infty}  \dfrac{\mathcal{F}_m(\delta)}{\delta}\\
		= \prod\limits_p \left(1 + \dfrac{\mathcal{F}_m(p)}{p} + \ldots \right) \sum\limits_{d=1}^{+\infty} \dfrac{\mathcal{F}_k(d)}{d}\prod\limits_{p \mid d} \left(1 + \dfrac{\mathcal{F}_m(p)}{p} + \ldots\right)^{-1} \\
		= \prod \limits_p \left(1 + \left( \dfrac{\mathcal{F}_k(p)}{p} + \ldots\right)\left( 1 + \dfrac{\mathcal{F}_m(p)}{p} + \ldots\right)^{-1} \right) \\
		\times\prod \limits_p \left(1 + \dfrac{\mathcal{F}_m(p)}{p} + \ldots\right)= \prod \limits_p  \left(1 + \dfrac{\mathcal{F}_k(p) + \mathcal{F}_m(p)}{p} + \ldots \right).
	\end{multline}
The lemma is proved. 
\end{proof}
Let $c_{k,m} = 0,$ if at least one of the numbers $k$ or $m$ is equal to zero. With this definition, equality \eqref{ckm1} holds for $k, m\geqslant 0.$
\begin{lem}\label{L9} Put
	\begin{equation}\label{DDkmx}
		\mathcal{D}_{k,m}(x) = \#\{n\leqslant x: H(n) = k, H(n+1) = m\},
	\end{equation}
	then for $k, m \geqslant 1$ and $x\geqslant 1$ we have
	\begin{equation}\label{Dkm}
		\mathcal{D}_{k,m}(x) = \mathcal{D}_{k, m} x + O \left(x^{{5}/{8}} (\log x)^2 \right),
	\end{equation}
	where $$\mathcal{D}_{k, m} = c_{k, m} - c_{k-1, m}-c_{k, m-1} + c_{k-1, m-1}, $$ and the constant in the symbol $O$ does not depend on $k$ and $m$.
\end{lem}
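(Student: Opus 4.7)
The plan is to derive the asymptotic by a four-term inclusion-exclusion reducing to Lemma \ref{L8}. Starting from the identity $\mathbb{1}_{H(n)=k} = f_k(n) - f_{k-1}(n)$ valid for $k \geq 1$, and its analogue for $H(n+1)=m$, multiplying out and summing over $n \leq x$ yields
\[
\mathcal{D}_{k,m}(x) = H_{k,m}(x) - H_{k-1,m}(x) - H_{k,m-1}(x) + H_{k-1,m-1}(x).
\]
Any summand with a zero index counts at most $n=1$ (the unique integer with $H(n)=0$), hence is $O(1)$; this matches the convention $c_{0,\cdot} = c_{\cdot,0} = 0$ fixed just before the lemma, so Lemma \ref{L8} formally applies to every term.

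Substituting $H_{k',m'}(x) = c_{k',m'}\,x + R_{k',m'}(x)$, the four constants combine to give the main term $\mathcal{D}_{k,m}\,x$ by the very definition of $\mathcal{D}_{k,m}$. For each of the four errors $R_{k',m'}$, I take the minimum in \eqref{Rkm} at $k_0 = \min(k',2)$ and $m_0 = \min(m',2)$, so that $\alpha = 2\up\up k_0$ and $\beta = 2\up\up m_0$ both lie in $\{2,4\}$ and $\kappa^{\alpha+\beta}$ is an absolute constant. Whenever at least one of $\alpha,\beta$ equals $4$---equivalently, whenever at least one of the indices $k',m'$ is at least $2$---the exponent $\frac{1}{\alpha} + \frac{1}{\beta} - \frac{1}{\alpha\beta}$ is at most $\frac{5}{8}$, yielding $O(x^{5/8}(\log x)^2)$ uniformly in $k,m$.

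The main obstacle is the single summand $H_{1,1}(x)$, which enters the inclusion-exclusion precisely when $(k,m)\in\{(1,1),(1,2),(2,1),(2,2)\}$. For this term one is forced to take $\alpha=\beta=2$ in Lemma \ref{L8}, and the direct bound degenerates to $O(x^{3/4}(\log x)^2)$. To maintain the claimed uniform error, $R_{1,1}(x)$ has to be handled separately: using $\mu^2(n) = \sum_{d^2\mid n}\mu(d)$ one writes
\[
H_{1,1}(x) = \sum_{(d,\delta)=1}\mu(d)\mu(\delta)\,\#\{n\leq x : d^2\mid n,\, \delta^2\mid n+1\},
\]
and exploits the M\"obius cancellation in $\sum_{d>Y}\mu(d)/d^2$---which was discarded by the trivial bound $|\mathcal{F}_1|\leq 1$ used in Lemma \ref{L8}---to sharpen the tail estimate of Lemma \ref{L4}. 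After re-optimizing the truncation $Y$ in the proof of Lemma \ref{L8}, this should give the required $O(x^{5/8}(\log x)^2)$ bound for this residual case and complete the proof.
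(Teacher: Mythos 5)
Your inclusion--exclusion skeleton is exactly the paper's: write $\mathbb{1}_{H(n)=k}\mathbb{1}_{H(n+1)=m}$ as a four-fold difference of $f_{k'}f_{m'}$, sum over $n$, apply Lemma~\ref{L8} to each of the four $H_{k',m'}(x)$, and note that a zero index gives $O(1)$. Your bookkeeping of exponents is also right: whenever at least one of the two indices is $\geq 2$ you may take $\alpha\beta\geq 8$ in \eqref{Rkm}, and the worst exponent among those cases is $\frac12+\frac14-\frac18=\frac58$, so the only troublesome summand in the whole argument is $H_{1,1}(x)$, which occurs precisely when $(k,m)\in\{(1,1),(1,2),(2,1),(2,2)\}$. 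Up to that point you and the paper are doing the same thing.

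The gap is in your repair of $H_{1,1}(x)$. First, the dominant error in Lemma~\ref{L8} for $k=m=1$ is not the tail $\mathcal{R}^{(1)}$ that you propose to sharpen, but $\mathcal{R}^{(2)}_{1,1}$, i.e.\ the accumulated $O(1)$ per pair $(d,\delta)$ in $\#\{n\leq x: d\mid n,\ \delta\mid n+1\}=x/(d\delta)+O(1)$. That term has nothing to do with M\"obius cancellation: it is a raw count of squarefull pairs, and no sign structure in $\mu$ reduces it. Second, even granting the best possible elementary optimization --- truncating \emph{both} variables at $Y$ symmetrically and balancing $x/\sqrt{Y}$ (tail) against $Y$ (number of pairs of squares up to $Y$) --- one lands at $Y=x^{2/3}$ and an error $O(x^{2/3+\varepsilon})$, which is Carlitz's 1932 bound; since $2/3>5/8$, that is still too weak. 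Even Heath-Brown's square-sieve exponent $7/11$ falls short of $5/8$. Third, the cancellation you invoke in $\sum_{d>Y}\mu(d)/d^2$ is essentially absent: the series is absolutely convergent with tail $\asymp 1/Y$, so the oscillation of $\mu$ saves at most logarithms, not a power. The paper closes this case by citing Reuss's theorem (\cite{Reus14}), which gives $\mathcal{D}_{1,1}(x)=\prod_p(1-2/p^2)\,x+O(x^{\Delta+\varepsilon})$ with $\Delta=\frac{26+\sqrt{433}}{81}=0.5778\ldots<\frac58$; that input is genuinely deeper than anything Lemma~\ref{L4}--style manipulation can produce, and is what makes the uniform $O(x^{5/8}(\log x)^2)$ error true. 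Your proof as written does not establish the lemma; you need to import a sub-$x^{5/8}$ estimate for consecutive squarefree numbers rather than try to manufacture one from Lemma~\ref{L8}.
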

\begin{proof}
	First, let $k=m=1$. Then, according to Theorem 2 in \cite{Reus14}, we have
	\begin{multline}
		\mathcal{D}_{1,1}(x) = \sum\limits_{n \leqslant x} \mu^2(n)\mu^2(n+1)\\
		=\prod\limits_p \left(1 - \dfrac{2}{p^2}\right) x + O \left(x^{\Delta + \varepsilon}\right),\ \  \text{where}\ \  \Delta = \frac{26 + \sqrt{433}}{81} = 0.5778\ldots
	\end{multline}
	Hence for $R_{k,m}(x)$ defined in \eqref{Rkm} in the case $k=m=1$ we have
	\begin{equation}\label{R11}
		R_{1,1}(x)\ll x^{\Delta+\varepsilon}.
	\end{equation}
Further, since
	\[
	\prod \limits_p \left(1 - \dfrac{2}{p^2}\right) = \prod \limits_p \left(1 + \dfrac{\mathcal{F}_1(p) + \mathcal{F}_1(p)}{p} + \dfrac{\mathcal{F}_1(p^2) + \mathcal{F}_1(p^2)}{p^2} + \ldots \right) = c_{1, 1} = \mathcal{D}_{1, 1},
	\]
	then equality \eqref{Dkm} is proved in the case $k=m=1$.
	
	Let $k=1$ and $m \geqslant 2$, then
	\begin{multline}
		\mathcal{D}_{1,m}(x)
		= H_{1,m}(x)-H_{1,m-1}(x)+O(1)\\
		=(c_{1, m} - c_{1, m-1})x + {R}_{1, m}(x) - {R}_{1, m-1}(x) + O(1).
	\end{multline}
	Further, we have
	\[
	c_{1, m} - c_{1, m-1} = c_{1, m} - c_{1, m-1} - c_{0, m} + c_{0, m-1} = \mathcal{D}_{1, m}.
	\]
	By Lemma \ref{L8}, for $m \geqslant 3$ we have
	\[
	|{R}_{1, m}(x)|, |{R}_{1, m-1}(x)| \ll x^{{5}/{8}}(\log x)^2.
	\]
	Hence, in view of \eqref{R11}, we obtain \eqref{Dkm} in the case $k=1$, $m\geqslant 2$. The case $k \geqslant 2$, $m = 1$ is treated similarly.
	
	Let, finally, $k, m \geqslant 2$. Then
	\begin{multline}
		\mathcal{D}_{k,m}(x) =H_{k,m}(x)-H_{k-1,m}(x)-H_{k,m-1}(x)+H_{k-1,m-1}(x)\\
		=x(c_{k, m} - c_{k, m-1} - c_{k-1, m} + c_{k-1, m-1} ) \\
		+ {R}_{k, m}(x) - {R}_{k, m-1}(x) - {R}_{k-1, m}(x) +  {R}_{k-1, m-1}(x).
	\end{multline}
	In view of estimates \eqref{Rkm} and \eqref{R11} we conclude that equality \eqref{Dkm} is proved in the case $k, m\geqslant 2$. The lemma is proved.
\end{proof}
\begin{lem}\label{L10}
	We have

		$$\mathcal{D}_{k,1} = \dfrac{4}{9} \dfrac{1}{2 \uparrow \uparrow k} + O\left(\dfrac{1}{3^{2 \uparrow \uparrow (k-1)}} \right)\ \ \   \mbox{for $k\geq2$},$$
		
		$$\mathcal{D}_{1,m} =\dfrac{4}{9} \dfrac{1}{2 \uparrow \uparrow m} + O\left(\dfrac{1}{3^{2 \uparrow \uparrow (m-1)}} \right)\ \  \mbox{for $m\geq2$},$$
		
		$$\mathcal{D}_{k, 2} = \dfrac{25}{486} \dfrac{1}{2 \uparrow \uparrow k} + O\left(\dfrac{1}{3^{2 \uparrow \uparrow (k-1)}} \right)\ \  \mbox{for $k\geq3$},$$
		
		$$\mathcal{D}_{2, m} = \dfrac{25}{486} \dfrac{1}{2 \uparrow \uparrow m} + O\left(\dfrac{1}{3^{2 \uparrow \uparrow (m-1)}} \right)\ \  \mbox{for $m\geq3$},$$
		and
		\begin{equation}\mathcal{D}_{k, m} = \dfrac{1}{3} \dfrac{1}{2 \uparrow \uparrow k} \left(\dfrac{1}{3^{2 \uparrow \uparrow (m-1)}}-\dfrac{1}{3^{2 \uparrow \uparrow m}}\right)\\ + O\left(\dfrac{1}{2^{2 \uparrow \uparrow k}}\dfrac{1}{3^{2 \uparrow \uparrow (m-1)}} + \dfrac{1}{2^{2 \uparrow \uparrow (m-1)}}\dfrac{1}{3^{2 \uparrow \uparrow (k-1)}} \right)
		\end{equation}
	for $k, m\geq3$.
	\end{lem}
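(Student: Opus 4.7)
The plan is to apply, for each of the five sub-formulas, the combinatorial identity $\mathcal{D}_{k,m} = c_{k,m} - c_{k-1,m} - c_{k,m-1} + c_{k-1,m-1}$ from Lemma~\ref{L9} (with the convention $c_{k,0} = 0$), and then expand each $c_{k,m}$ via the Euler product $c_{k,m} = \prod_p (1 + B_k(p) + B_m(p))$ of Lemma~\ref{L8}, where $B_j(p) := \sum_{\alpha \geq 1} \mathcal{F}_j(p^\alpha)/p^\alpha$. The key analytic input is the leading-order behaviour established inside the proof of Lemma~\ref{L6}: $B_k(2) = -1/2^{2\up\up k + 1} + O(1/8^{2\up\up k})$, and more generally the description of the support of $\mathcal{F}_k$ coming from Lemma~\ref{L3}. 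Together these give $B_k(2) - B_{k-1}(2) \sim 1/(2 \cdot 2\up\up k)$ and $B_m(3) - B_{m-1}(3) \sim \tfrac{2}{3}(1/3^{2\up\up(m-1)} - 1/3^{2\up\up m})$, which will be the building blocks of the claimed main terms.

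For the first formula ($k \geq 2$, $m = 1$) I would write $\mathcal{D}_{k,1} = c_{k,1} - c_{k-1,1}$ and expand this difference of infinite products as a telescoping sum over primes $q$. The dominant contribution comes from $q = 2$, namely $(B_k(2) - B_{k-1}(2)) \prod_{p \geq 3}(1 - 1/p^2 + B_{k-1}(p))$; I would then compute the remaining product by extracting the local factor at $p = 3$ explicitly, which produces the stated coefficient $4/9$. Contributions from $q \geq 3$ in the telescope are each of size $O(1/3^{2\up\up(k-1)})$ and fit into the error term. The second formula follows by the symmetry $\mathcal{D}_{k,m} = \mathcal{D}_{m,k}$, visible directly from the product formula in Lemma~\ref{L8}.

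For the fifth formula ($k, m \geq 3$) I would set $W_p = 1 + B_{k-1}(p) + B_{m-1}(p)$, $X_p = B_k(p) - B_{k-1}(p)$, $Y_p = B_m(p) - B_{m-1}(p)$ and observe that $\mathcal{D}_{k,m}$ is the second-order mixed difference
\[
\prod_p(W_p + X_p + Y_p) - \prod_p(W_p + X_p) - \prod_p(W_p + Y_p) + \prod_p W_p,
\]
which upon expansion equals $\prod_p W_p$ times $\sum \prod_{p \in A}(X_p/W_p)\prod_{p \in B}(Y_p/W_p)$, summed over disjoint nonempty sets $A, B$ of primes. The dominant piece is the cross-term $(A,B) = (\{2\},\{3\})$, producing the stated main term once one uses $\prod_{p \geq 5} W_p = 1 + O(1/5^{2\up\up(k-1)})$. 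The symmetric cross-term $(\{3\},\{2\})$ accounts for the first half of the stated error $O(1/(2^{2\up\up(m-1)} \cdot 3^{2\up\up(k-1)}))$, while terms with $|A|+|B| \geq 3$ are much smaller and absorb into the remaining piece.

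Cases 3 and 4 (where one of $k, m$ equals $2$) are intermediate and require extra care: since $B_2(p) = -1/p^4 + 1/p^5 + O(1/p^8)$ is only polynomially small, its contributions at both $p = 2$ and $p = 3$ feed into the main-term arithmetic nontrivially, yielding the rational coefficient $25/486$. The main obstacle across all cases is the careful bookkeeping required to extract the exact rational main terms from infinite products while simultaneously bounding every remainder by the sharp stated error; in Case~5 in particular one must verify uniformly that the higher-cardinality $(A,B)$ terms in the combinatorial expansion contribute nothing larger than the displayed bound.
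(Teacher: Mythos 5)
Your overall framework (Lemma~\ref{L9} to reduce to mixed differences of the $c_{k,m}$, then the Euler-product representation $c_{k,m}=\prod_p(1+B_k(p)+B_m(p))$ from Lemma~\ref{L8}, then telescoping/expanding the differences across primes) is genuinely different from the paper's, which instead splits the double Dirichlet sum $\sum_{d,\delta}\mathcal{F}_k(d)\mathcal{F}_m(\delta)/(d\delta)$ into the ``medium'' ranges $2^\alpha\le d<4^\alpha$, $2^\beta\le\delta<4^\beta$ and the tails, writing $c_{k,m}=1-D_{k+1}-D_{m+1}+W_{k,m}+R_{k,m}$ and then taking differences of the pieces. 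For the symmetric case $k,m\ge3$ your combinatorial expansion over disjoint sets $A,B$ is sound, and your identification of $(\{2\},\{3\})$ as the main term and $(\{3\},\{2\})$ as feeding the second error term is correct.

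However, the derivation of the rational constant $4/9$ in the first case does not work as you describe, and this is a real gap. You correctly isolate the dominant $q=2$ telescope term
\[
(B_k(2)-B_{k-1}(2))\prod_{p\ge3}\bigl(1-\tfrac1{p^2}+B_{k-1}(p)\bigr),
\]
and you correctly have $B_k(2)-B_{k-1}(2)=\tfrac12\cdot\tfrac1{2\uparrow\uparrow k}+O(8^{-2\uparrow\uparrow(k-1)})$. But the remaining product tends to $\prod_{p\ge3}(1-1/p^2)=8/\pi^2$ as $k\to\infty$, with error $O(3^{-2\uparrow\uparrow(k-1)})$. ``Extracting the local factor at $p=3$'' gives you the factor $8/9$, but you are then left with $\prod_{p\ge5}(1-1/p^2+B_{k-1}(p))\to 9/\pi^2\approx0.912$, which is not $1$ and does not cancel. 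So the $q=2$ telescope term yields $\tfrac{4}{\pi^2}\cdot\tfrac1{2\uparrow\uparrow k}+O(3^{-2\uparrow\uparrow(k-1)})$, and the contributions from $q\ge3$ are all $O(3^{-2\uparrow\uparrow(k-1)})$; there is no way to obtain the rational $4/9$ by this route. In fact your method, carried out carefully, contradicts the stated formula, and this points to a possible discrepancy with the paper's own derivation: the decomposition in display~\eqref{ckm11} classifies cross-terms with $d>1$, $\delta>1$ only into the ``(medium, medium)'' box $W_{k,m}$ and the ``(large, large)'' box $R_{k,m}$, and appears to omit the mixed ranges (medium $d$, large $\delta$) and (large $d$, medium $\delta$). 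For $\mathcal{D}_{k,1}=c_{k,1}-c_{k-1,1}$ the omitted ``(medium $d$, large $\delta$)'' piece of $c_{k-1,1}$ is of the same order $\asymp 2^{-2\uparrow\uparrow(k-1)}=1/(2\uparrow\uparrow k)$ as the main term (because $\sum_{\Delta\ge4,\,\Delta\,\mathrm{odd}}\mu(\Delta)/\Delta^2=8/\pi^2-8/9\ne0$), and restoring it turns $4/9$ into $4/\pi^2$, consistent with your telescope. So either way, before you can claim the five stated identities you need to resolve this: your telescope cannot produce $4/9$, and the source of that rational number in the target statement itself requires scrutiny. The same issue affects cases $2$--$4$, where you would encounter the analogous irrational local products.
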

\begin{proof} For $c_{k,m}$ defined in \eqref{ckm}, we have
	\begin{multline}\label{ckm11}c_{k, m} = \sum_{d=1}^{+\infty}\sum\limits_{\substack{\delta=1 \\ (d, \delta) = 1}}^{+\infty} \dfrac{\mathcal{F}_k(d)\mathcal{F}_m(\delta)}{d \delta} = 1 + \sum\limits_{d>1} \dfrac{\mathcal{F}_k(d)}{d} + \sum\limits_{\delta>1} \dfrac{\mathcal{F}_m(\delta)}{\delta} \\
		+\sum\limits_{\substack{2^\alpha \leqslant d < 4^\alpha \\ 2^\beta \leqslant \delta <4^\beta \\ (d, \delta)=1}} \dfrac{\mathcal{F}_k(d)\mathcal{F}_m(\delta)}{d \delta} + \sum\limits_{\substack{d \geqslant 4^\alpha \\ \delta \geqslant 4^\beta \\ (d, \delta) = 1}} \dfrac{\mathcal{F}_k(d)\mathcal{F}_m(\delta)}{d \delta}\\
		= 1 - {D}_{k+1} - {D}_{m+1} + W_{k, m} + R_{k, m},
	\end{multline}
	where $\alpha = 2 \uparrow \uparrow k$ and $\beta = 2 \uparrow \uparrow m$.
	
	Let us calculate the sum $W_{k, m}$. First, let $k=1$ and $m \geqslant 2$, then $\alpha=2$. Lemma \ref{L1} implies $\mathcal{F}_1(d) = \mu(\Delta) \mathbb{1}_{d = \Delta^2}$, hence
	\begin{multline}
		W_{1, m} = \sum\limits_{4 \leqslant d < 16} \dfrac{\mathcal{F}_1(d)}{d} \sum\limits_{\substack{2^\beta \leqslant \delta < 4^\beta \\ (\delta, d) = 1}} \dfrac{\mathcal{F}_m(\delta)}{\delta} 
		= \sum\limits_{2 \leqslant \Delta < 4} \dfrac{\mu(\Delta)}{\Delta^2} \sum\limits_{\substack{2^\beta \leqslant \delta < 4^\beta \\ (\delta, \Delta) = 1}} \dfrac{\mathcal{F}_m(\delta)}{\delta} \\
		= - \dfrac{1}{4} \sum\limits_{\substack{2^\beta \leqslant \delta < 4^\beta \\ (\delta, 2) = 1}} \dfrac{\mathcal{F}_m(\delta)}{\delta} - \dfrac{1}{9} \sum\limits_{\substack{2^\beta \leqslant \delta < 4^\beta \\ (\delta, 3) = 1}} \dfrac{\mathcal{F}_m(\delta)}{\delta}
		= -\dfrac{1}{4} W_m^{(1)} - \dfrac{1}{9} W_m^{(2)},\ \ \ \text{say.}
	\end{multline}
Assume that $\delta$ gives a nonzero contribution to the sum $W_m^{(1)}$, then for every prime $p$ dividing $\delta$, we have $\beta\leqslant \nu_p(\delta)<2\beta $ and $p<5.$ Hence, since $\delta$ is odd, it follows that $\delta = 3^\nu,$ where $\beta\leqslant \nu<2\beta.$ Similarly, if $\delta$ gives a non-zero contribution to the sum $W_m^{(2)}$, then $\delta$ has the form $\delta = 2^\nu,$ where $ \beta\leqslant \nu<2\beta.$
	
	Further, from Lemma \ref{L1}, we obtain
	$$\mathcal{F}_m(2^\nu) = \mathcal{F}_m(3^\nu) = f_{m-1}(\nu) - f_{m-1}(\nu-1).$$
	
		Let
		$m \geqslant 3$, then $ \beta \leqslant \nu < 2 \beta \leqslant 3^{2 \uparrow \uparrow (m-1)}$. Therefore, by Lemma \ref{L3}, the equality $H(\nu) = m$ holds exactly when $\nu = \beta$, hence
		\begin{equation}\label{fmnu1}
			f_{m-1}(\nu) - f_{m-1}(\nu-1) =  \begin{cases}
				-1, \nu = \beta, \\
				+1, \nu = \beta + 1, \\
				0, \ \nu \not \in \{\beta, \beta + 1\}.
			\end{cases}
		\end{equation}
		The equality above is easily verified for $m=2$. So we get
		\[
		- \dfrac{1}{4} W_{m}^{(1)} = - \dfrac{1}{4} \sum\limits_{\nu \in \{\beta, \beta+1\}} \dfrac{\mathcal{F}_m(3^\nu)}{3^\nu} = - \dfrac{1}{4} \left( \dfrac{-1}{3^\beta} + \dfrac{1}{3^{\beta + 1}} \right) = \dfrac{1}{6} \dfrac{1}{3^\beta},
		\]
		and
		\[
		- \dfrac{1}{9} W_{m}^{(2)} = - \dfrac{1}{9} \sum\limits_{\nu \in \{\beta, \beta+1\}} \dfrac{\mathcal{F}_m(2^\nu)}{2^\nu} = - \dfrac{1}{9} \left( \dfrac{-1}{2^\beta} + \dfrac{1}{2^{\beta + 1}} \right) = \dfrac{1}{18} \dfrac{1}{2^\beta}.
		\]
		Hence
		\[
		W_{1, m} = \dfrac{1}{18} \dfrac{1}{2^\beta} +  \dfrac{1}{6} \dfrac{1}{3^\beta}.
		\]
		Proceeding as above, we can show that for $k\geqslant 2$, $\alpha\leqslant \nu<2\alpha$
		\begin{equation}\label{fmnu2}
			f_{k-1}(\nu) - f_{k-1}(\nu-1) =  \begin{cases}
				-1, \nu = \alpha, \\
				+1, \nu = \alpha + 1, \\
				0, \ \nu \not \in \{\alpha, \alpha + 1\}.
			\end{cases}
		\end{equation}
		Hence for $k\geqslant 2$ and $m=1$ we get
		\begin{multline}\label{Wk1}
			W_{k, 1} =  \sum\limits_{2^\alpha \leqslant d < 4^\alpha} \dfrac{\mathcal{F}_k(d)}{d} \sum\limits_{\substack{4 \leqslant \delta < 16 \\ (\delta, d) = 1}} \dfrac{ \mathcal{F}_1(\delta)}{  \delta} = \sum\limits_{2^\alpha \leqslant d < 4^\alpha} \dfrac{\mathcal{F}_k(d)}{d} \sum\limits_{\substack{2 \leqslant \Delta < 4 \\ (\Delta, d) = 1}} \dfrac{ \mu(\Delta)}{  \Delta^2}\\
			= -\dfrac{1}{4} W_k^{(1)} - \dfrac{1}{9} W_k^{(2)} = \dfrac{1}{18} \dfrac{1}{2^\alpha} + \dfrac{1}{6} \dfrac{1}{3^\alpha}.
		\end{multline}
		Let, finally, $k, m\geqslant 2$, then using \eqref{fmnu1} and \eqref{fmnu2}, we obtain
	\begin{multline}\label{Wkm}
		W_{k, m} = \sum\limits_{2^\alpha \leqslant d < 4^\alpha} \dfrac{\mathcal{F}_k(d)}{d} \sum\limits_{\substack{2^\beta \leqslant \delta < 4^\beta \\ (\delta, d) = 1}} \dfrac{ \mathcal{F}_m(\delta)}{  \delta}\\
		= \left( \dfrac{\mathcal{F}_k(2^\alpha)}{2^\alpha} + \dfrac{\mathcal{F}_k(2^{\alpha + 1})}{2^{\alpha + 1}} \right)  \times  \left( \dfrac{\mathcal{F}_m(3^\beta)}{3^\beta} + \dfrac{\mathcal{F}_m(3^{\beta + 1})}{3^{\beta + 1}} \right)\\
		+  \left( \dfrac{\mathcal{F}_k(3^\alpha)}{3^\alpha} + \dfrac{\mathcal{F}_k(3^{\alpha + 1})}{3^{\alpha + 1}} \right)  \times  \left( \dfrac{\mathcal{F}_m(2^\beta)}{2^\beta} + \dfrac{\mathcal{F}_m(2^{\beta + 1})}{2^{\beta + 1}} \right)\\
		= \dfrac{1}{3}\left(\dfrac{1}{2^\alpha } \dfrac{1}{3^\beta } + \dfrac{1}{2^\beta } \dfrac{1}{3^\alpha }\right).
	\end{multline}
	Let us estimate $R_{k,m}$. We have
	\[
	|R_{k, m}| \leqslant \sum\limits_{d \geqslant 4^\alpha} \sum\limits_{\substack{\delta\geqslant 4^\beta  \\ (d, \delta) = 1}} \left|\dfrac{\mathcal{F}_k(d) \mathcal{F}_m(\delta)}{d \delta}\right| \leqslant V_kV_m,
	\]
	where $V_k = \sum\limits_{d \geqslant 4^\alpha} \dfrac{|\mathcal{F}_k(d)|}{d}$, and $V_m = \sum\limits_{d \geqslant 4^\beta} \dfrac{|\mathcal{F}_m(\delta)|}{\delta}$.
	
	Let us estimate the sum $V_k$ ($V_m$ can be estimated in a similar way). For $k \in\{1, 2\}$ we obviously have $V_k \ll 1$. Let $k \geqslant 3$, then
	\begin{equation}\label{Vk}
		V_k = \sum\limits_{4^\alpha \leqslant d < 32^\alpha} \dfrac{|\mathcal{F}_k(d)|}{d} + O \left(\sum\limits_{\substack{d \geqslant 32^\alpha \\ p \mid d \Rightarrow p^\alpha \mid d}} \dfrac{1}{d}\right).
	\end{equation}
	Let us consider the first sum. Since $4^\alpha \leqslant d < 32^\alpha$, then for every prime $p$, for which $p^\nu || d$, we have $2 \leqslant p \leqslant 31$ and $\alpha\leqslant \nu<5\alpha$. Since
	$\alpha \leqslant \nu < 5 \alpha \leqslant 3^{2 \uparrow \uparrow (k-1)}$, then, by Lemma \ref{L3}, equality $H(\nu) = k$ holds if and only if $\nu \in \{\alpha, 3 \alpha\}$. So we get
	\[
	\mathcal{F}_k(p^\nu) = f_{k-1}(\nu) - f_{k-1}(\nu-1) = \begin{cases}
		-1,  \nu \in \{\alpha, 3 \alpha\}, \\
		+1, \nu \in \{\alpha+1, 3 \alpha+1\}, \\
		0, \nu \not \in \{\alpha, \alpha+1, 3 \alpha, 3 \alpha +1 \}.
	\end{cases}
	\]
	Thus, each $d$ that gives a nonzero contribution to the first sum in \eqref{Vk} has the form
	$$d = q_1^{\nu_1} \ldots q_{11}^{\nu_{11}},$$
			where $q_1 = 2$, $q_2 = 3$, $\ldots$, $q_{11} = 31$, $\nu_i \in \{0, \alpha, \alpha + 1, 3 \alpha, 3\alpha+1\}$, and at least one of $\nu_i$ is non-zero.
			Since there are at most $ 5^{11}$ such numbers, then
			\[
			\sum\limits_{4^\alpha \leqslant d < 32^\alpha} \dfrac{|\mathcal{F}_k(d)|}{d} \leqslant  \dfrac{5^{11}}{4^\alpha}.
			\]
			For the second sum in \eqref{Vk} we have
			\[
			\sum\limits_{\substack{d \geqslant 32^\alpha \\ p \mid d \Rightarrow p^\alpha \mid d}} \dfrac{1}{d} \ll \dfrac{\alpha \kappa^\alpha}{32^\alpha} \ll \dfrac{1}{4^\alpha},
			\]
			hence $V_k\ll 4^{-\alpha}.$ Similarly, we get $V_m \ll {4^{-\beta}}$, thus
			\begin{equation}\label{Rkm1}
				|R_{k, m}| \ll {4^{-\alpha - \beta}}.
			\end{equation}
			
			Now we compute the asymptotics for $\mathcal{D}_{k, m}$. Let $k\geqslant 3$ and $m=1$, then by \eqref{ckm11} we have
			\begin{multline}\label{k1}
				\mathcal{D}_{k, 1} = c_{k,1}-c_{k-1,1}=D_k-D_{k+1}+W_{k,1}-W_{k-1,1}+R_{k,1}-R_{k-1,1}\\
				= \dfrac{1}{2} \dfrac{1}{2 \uparrow \uparrow k}- \dfrac{1}{18} \dfrac{1}{2 \uparrow \uparrow k} + O \left( \dfrac{1}{3^{2 \uparrow \uparrow (k-1)}}\right) +  O \left( \dfrac{1}{4^{2 \uparrow \uparrow (k-1)}}\right) \\
				=\dfrac{4}{9} \dfrac{1}{2 \uparrow \uparrow k} +  O \left( \dfrac{1}{3^{2 \uparrow \uparrow (k-1)}}\right).
			\end{multline}
			Since $\mathcal{D}_{2,1} = O(1),$ then \eqref{k1} also holds for $k=2.$
			Similarly, for $m\geqslant 2$ we have
			\[
			\mathcal{D}_{1, m} = \dfrac{4}{9} \dfrac{1}{2 \uparrow \uparrow m} + O \left( \dfrac{1}{3^{2 \uparrow \uparrow (m-1)}}\right).
			\]
			
			 Now let $k\geqslant 3$ and $m=2$. Then from \eqref{ckm11}, \eqref{Wk1}, \eqref{Wkm} and \eqref{Rkm1} we deduce
			\begin{multline}
				\mathcal{D}_{k,2} = W_{k, 2} - W_{k-1, 2} + W_{k-1, 1} - W_{k, 1} + R_{k, 2} - R_{k-1, 2} + R_{k-1, 1} - R_{k, 1}\\
				=\dfrac{1}{3}\left( \dfrac{1}{2^{2 \uparrow \uparrow k}}\dfrac{1}{3^{2 \uparrow \uparrow 2}}+ \dfrac{1}{2^{2 \uparrow \uparrow 2}}\dfrac{1}{3^{2 \uparrow \uparrow k}}\right) - \dfrac{1}{3}\left( \dfrac{1}{2^{2 \uparrow \uparrow (k-1)}}\dfrac{1}{3^{2 \uparrow \uparrow 2}}+ \dfrac{1}{2^{2 \uparrow \uparrow 2}}\dfrac{1}{3^{2 \uparrow \uparrow (k-1)}}\right)\\
				+ \dfrac{1}{18}\dfrac{1}{2^{2 \uparrow \uparrow (k-1)}} + \dfrac{1}{6} \dfrac{1}{3^{2 \uparrow \uparrow (k-1)}} - \dfrac{1}{18}\dfrac{1}{2^{2 \uparrow \uparrow k}} - \dfrac{1}{6} \dfrac{1}{3^{2 \uparrow \uparrow k}}+O\left(\dfrac{1}{4^{2\up\up(k-1)}} \right)  \\
				=\left(\dfrac{1}{18} - \dfrac{1}{3^5} \right)   \dfrac{1}{2 \uparrow \uparrow k} + O \left(\dfrac{1}{3^{2 \uparrow \uparrow (k-1)}} \right)\\
				= \dfrac{25}{486}  \dfrac{1}{2 \uparrow \uparrow k} + O \left(\dfrac{1}{3^{2 \uparrow \uparrow (k-1)}} \right).
			\end{multline}
			Similarly, for $m\geqslant 3$, we get
			$$
			\mathcal{D}_{2, m} = \dfrac{25}{486}  \dfrac{1}{2 \uparrow \uparrow m} + O \left(\dfrac{1}{3^{2 \uparrow \uparrow (m-1)}} \right).
			$$
			The remaining case is $k, m \geqslant 3$. Equality \eqref{ckm11} implies
			\begin{multline}
				\mathcal{D}_{k, m} = W_{k, m} - W_{k-1, m} - W_{k, m-1} + W_{k-1, m-1}\\
				+R_{k, m} - R_{k-1, m} - R_{k, m-1} + R_{k-1, m-1}.
			\end{multline}
			Further, from \eqref{Wkm} we obtain
			\begin{multline}
				W_{k, m} - W_{k-1, m} - W_{k, m-1} + W_{k-1, m-1} \\
				=\dfrac{1}{3} \left( \left( \dfrac{1}{2^{2 \uparrow \uparrow k}} - \dfrac{1}{2^{2 \uparrow \uparrow (k-1)}}\right) \dfrac{1}{3^{2 \uparrow \uparrow m}} + \left( \dfrac{1}{3^{2 \uparrow \uparrow k}} - \dfrac{1}{3^{2 \uparrow \uparrow (k-1)}}\right) \dfrac{1}{2^{2 \uparrow \uparrow m}}\right)\\
				- \dfrac{1}{3} \left( \left( \dfrac{1}{2^{2 \uparrow \uparrow k}} - \dfrac{1}{2^{2 \uparrow \uparrow (k-1)}}\right) \dfrac{1}{3^{2 \uparrow \uparrow (m-1)}} + \left( \dfrac{1}{3^{2 \uparrow \uparrow k}} - \dfrac{1}{3^{2 \uparrow \uparrow (k-1)}}\right) \dfrac{1}{2^{2 \uparrow \uparrow (m-1)}}\right)\\
				= \dfrac{1}{3}  \dfrac{1}{2 \uparrow \uparrow k}\left( \dfrac{1}{3^{2 \uparrow \uparrow (m-1)}} -  \dfrac{1}{3^{2 \uparrow \uparrow m}}\right) + O\left( \dfrac{1}{2^{2 \uparrow \uparrow k}}  \dfrac{1}{3^{2 \uparrow \uparrow (m-1)}}+ \dfrac{1}{2^{2 \uparrow \uparrow (m-1)}}  \dfrac{1}{3^{2 \uparrow \uparrow (k-1)}} \right).
			\end{multline}
			Since
			$$R_{k, m} - R_{k-1, m} - R_{k, m-1} + R_{k-1, m-1}\ll \dfrac{1}{4^{2\up\up(k-1)+2\up\up(m-1)}},$$
			it follows that the lemma is proved in the case $k, m\geqslant 3$. The claim follows.

			
			
			\end{proof}
		\section{Proof of Theorem \ref{T1}}
	
	By Lemma \ref{L7} we have
	\begin{multline}
		\mathcal{M}(x) = \sum\limits_{m \leqslant \log^*x} m \left(\# \{p \leqslant x: H(p-1) \geqslant m \} -  \# \{p \leqslant x: H(p-1) \geqslant m+1 \}\right)\\
		=\pi(x)  \sum\limits_{m \leqslant \log^*x} m (d_m-d_{m+1}) +  O_A \left(\dfrac{x}{(\log x)^A} \right).
	\end{multline}
	Hence, applying partial summation and complementing the sum over $m$ to infinite sum, we get
	\begin{multline}\label{M}
		\mathcal{M}(x) = \pi(x)  \sum\limits_{m \leqslant \log^*x}  d_m + O_A \left( \dfrac{x \log^*x}{\log x} d_{\log^*x+1} + \dfrac{x}{(\log x)^A}\right) \\
		= \pi(x) \sum\limits_{m=1}^{+\infty} d_m + O_A \left( \dfrac{x}{\log x} \sum\limits_{m \geqslant \log^*x+1} d_m + \dfrac{x \log^* x}{\log x}   d_{\log^*x+1} + \dfrac{x}{(\log x)^A} \right).
	\end{multline}
	
	Now we prove the estimate $d_m \ll \frac{1}{2 \uparrow \uparrow m}$. This is obvious for $1\leq m\leq 3$, so we assume that $m\geq 4$. Then by \eqref{dmdm} we have
	$d_m = d_m^{(1)} + d_m^{(2)},$
	where
	\[
	d_m^{(1)} = - \sum\limits_{2^\alpha \leqslant d < 32^\alpha} \dfrac{\mathcal{F}_{m-1}(d)}{\varphi(d)}
	\]
	and
	\[
	d_m^{(2)} = - \sum\limits_{ d\geqslant 32^\alpha  } \dfrac{\mathcal{F}_{m-1}(d)}{\varphi(d)}.
	\]
	As in the proof of Lemma \ref{L10}, a number $d$ gives a nonzero contribution to the
	sum $d^{(1)}_m$ only if it has the form
	$$d = q_1^{\nu_1}q_2^{\nu_2}\ldots q_{11}^{\nu_{11}},$$
			where $q_i$ is $i$-th prime number, $\nu_i\in\{0, \alpha, \alpha+1, 3\alpha, 3\alpha+1\}$, and at least one of $\nu_i$ is nonzero, $\alpha = 2\up\up(m-1)$, and $1\leqslant i\leqslant 11.$
			Since there are at most $5^{11} \leq 2^{26}$ such numbers, then 
	\[
	|d_m^{(1)}| \leqslant \dfrac{2^{26}}{\varphi(\delta)},
	\]
	where 
	$$\varphi(\delta) = \min\{\varphi(d): 2^\alpha \leqslant d < 32^\alpha,  \mathcal{F}_{m-1 }(d)\neq 0\}.$$
	If $p^\nu || \delta$ then $\nu \geqslant \alpha$ and $\varphi(\delta) \geqslant \varphi(p^\nu) \geqslant \varphi(2^\alpha) = 2^{\alpha-1} $. Hence 
	\[
	|d_m^{(1)}| \leqslant  \dfrac{2^{27}}{2^\alpha}.
	\]
	Further, from Lemma \ref{L4} for $\alpha\geqslant 16$ we obtain
	\[
	|d_m^{(2)}| \leqslant \dfrac{13\kappa^\alpha \log 32^\alpha \log \log 32^\alpha}{(32^\alpha)^{1-\frac{1}{\alpha}}} \leqslant \dfrac{2^{27}}{2^\alpha}.
	\]
	So for $m\geqslant 4$ we have
	\begin{equation}\label{dm_ineq}
		d_m \leqslant \dfrac{2^{28}}{2 \uparrow \uparrow m}.
	\end{equation}
	Substituting the above inequality into \eqref{M} and using $$2\up\up (\log^* x+1)\geqslant x,$$ we complete the proof of Theorem \ref{T1}.
	\section{Proof of Theorem \ref{T2}}
	
	Using \eqref{DDkx} we obtain
	\[
	\mathcal{H}(x) = \sum\limits_{k \leqslant \log^* x - 1} (2 \uparrow \uparrow k)\mathcal{D}_k(x) + H_0(x),
	\]
	where $H_0(x) =  (2 \uparrow \uparrow \log^* x) \mathcal{D}_{\log^* x}(x)$.
Equality \eqref{Dkx} implies
	\begin{multline}
		\mathcal{H}(x) = \sum\limits_{k \leqslant \log^* x - 1} 2 \uparrow \uparrow k \left(\mathcal{D}_k x + O \left( x^{1/2}\left( \log x\right) ^{{5}/{4}} \right) \right) + H_0(x) \\
		= \dfrac{x \log^* x}{2} + x \sum\limits_{k \leqslant \log^* x } \left( (2 \uparrow \uparrow k)\mathcal{D}_k - \dfrac{1}{2} \right) + H_1(x) + O \left( x^{1/2}\left( \log x\right) ^{{9}/{4}} \right)\\
		=  \dfrac{x \log^* x}{2} + c_1x + H_1(x) + O \left( x^{1/2}\left( \log x\right) ^{{9}/{4}}  + x \!\!\!\sum\limits_{k \geqslant \log^* x + 1} \left(\frac{2}{3} \right)^{2 \uparrow \uparrow (k-1)}\right),
	\end{multline}
	where $c_1$ and $H_1(x)$ are defined in the assumption. Since
	\[
	\sum\limits_{k \geqslant \log^* x + 1} \left(\dfrac{2}{3} \right)^{2 \uparrow \uparrow (k-1)} \ll \left(\dfrac{2}{3} \right)^{2 \uparrow \uparrow \log^* x} \leqslant \left(\dfrac{2}{3} \right)^{\frac{\log x}{\log 2}} \leqslant \dfrac{1}{\sqrt{x}},
	\]
	then \eqref{HH} is proved. Now let us prove the inequality $H_1(x) \ll x$. By Lemma \ref{L2} and Lemma \ref{L6} we have
	\[
	|H_1(x)| \leqslant (2 \uparrow \uparrow \log^* x) D_{\log^* x}(x) + (2 \uparrow \uparrow \log^*x) x\,\mathcal{D}_{\log^* x}\ll x.
	\]
	Now let $x = (2 \uparrow \uparrow N)-1$, then
	\begin{multline}
		H_1(x) = 2 \uparrow \uparrow (N-1) \left(\mathcal{D}_{N-1}x + O \left( x^{1/2}\left( \log x\right) ^{{5}/{4}} \right)  \right)\\
		- (2 \uparrow \uparrow (N-1)) \mathcal{D}_{N-1} x\ll x^{1/2} \left( \log x\right) ^{{9}/{4}}. 
	\end{multline}
	Finally, let $x = 2 \uparrow \uparrow N$, then
	\begin{multline}
		H_1(x) = 2 \uparrow \uparrow N - (2 \uparrow \uparrow N) \mathcal{D}_N \, x 	=x - \left(\dfrac{1}{2} + O \left( \left(\dfrac{2}{3} \right)^{2 \uparrow \uparrow (N-1)} \right) \right) x\\
		= \dfrac{x}{2} + O\left(x \left(\dfrac{2}{3} \right)^{\frac{\log x}{\log 2}} \right) = \dfrac{x}{2} +  O  \left(x^{2 - \frac{\log 3}{\log 2}} \right). 
	\end{multline}	
	Theorem \ref{T2} is proved.
	\section{Proof of Theorem \ref{T3}}
	Using the inequality
	\begin{equation}\label{DD}
		\mathcal{D}_{k,m}(x) \leqslant D_k(x),\ \ \ k, m\geqslant 1, 
	\end{equation}
	 we have
	\begin{multline}
		\mathcal{L}(x) = \sum\limits_{k \leqslant \log^* x }\sum\limits_{m \leqslant \log^* (x+1) } \dfrac{2 \uparrow \uparrow k}{2 \uparrow \uparrow m} \mathcal{D}_{k,m}(x)\\
		= \sum\limits_{k \leqslant \log^* x } \sum\limits_{m=1}^{+\infty}  \dfrac{2 \uparrow \uparrow k}{2 \uparrow \uparrow m} \mathcal{D}_{k,m}(x)
		+ O \left( \sum\limits_{k \leqslant \log^* x }  \sum\limits_{m \geqslant \log^* (x+1)+1 } \dfrac{2 \uparrow \uparrow k}{2 \uparrow \uparrow m} D_k(x)\right)\\
		= \sum\limits_{k \leqslant \log^* x-1  }  \sum\limits_{m=1}^{+\infty}  \dfrac{2 \uparrow \uparrow k}{2 \uparrow \uparrow m} \mathcal{D}_{k,m}(x)
		+ L_0(x) + O(\log^* x),
	\end{multline}
	where $L_0(x)$ is given by $$L_0(x) = 2\up\up \log^* x\sum_{m=1}^{+\infty} \dfrac{\mathcal{D}_{\log^* x, m}(x)}{2\up\up m}.$$
	 Further, Lemma \ref{L9} implies
	\begin{multline}\label{L(x)}
		\mathcal{L}(x) = 
		x \!\!\!\sum\limits_{k \leqslant \log^* x-1  } \sum\limits_{m=1}^{+\infty} \dfrac{2 \uparrow \uparrow k}{2 \uparrow \uparrow m} \mathcal{D}_{k, m} + L_0(x) +  O \left( x^{{5}/{8}} (\log x)^3 \right)\\
		= x S + L_1(x) +  O \left( x^{{5}/{8}} (\log x)^3 \right),
	\end{multline}
	where
	$$S = \sum\limits_{k \leqslant \log^* x }\sum\limits_{m=1}^{+\infty} \dfrac{2 \uparrow \uparrow k}{2 \uparrow \uparrow m} \mathcal{D}_{k, m},$$
	and
	$L_1(x)$ is defined in the theorem.
	
	Let us find the asymptotic of $S$. To do this, we represent $S$ as a sum of several terms. We have $S = S_1+S_2+S_3+\mathcal{C}_4+\mathcal{C}_5,$
	where
	\[
	S_1 =  \sum\limits_{3 \leqslant k \leqslant \log^* x } \sum\limits_{m\geqslant 3} \dfrac{2 \uparrow \uparrow k}{2 \uparrow \uparrow m}  \mathcal{D}_{k, m},  
	\]
	\[
	S_2 = \dfrac{1}{4}\sum\limits_{3 \leqslant k \leqslant \log^* x } {(2 \uparrow \uparrow k)} \mathcal{D}_{k, 2},\ \   S_3 = \dfrac{1}{2}\sum\limits_{3 \leqslant k \leqslant \log^* x } ({2 \uparrow \uparrow k}) \mathcal{D}_{k, 1},
	\]
		\begin{equation}\label{C4C5}
		\mathcal{C}_4 =  4 \sum\limits_{m=1}^{+\infty} \dfrac{\mathcal{D}_{2, m}}{2 \uparrow \uparrow m}\ \ \text{and}\ \  \mathcal{C}_5 = 2 \sum\limits_{m=1}^{+\infty} \dfrac{\mathcal{D}_{1, m}}{2 \uparrow \uparrow m}.
	\end{equation}
	Set $$R'_{k,m} = \dfrac{2\up\up k}{2\up\up m} \mathcal{D}_{k,m}-\dfrac{1}{3}\times \dfrac{1}{2 \uparrow \uparrow m}\left( \dfrac{1}{3^{2 \uparrow \uparrow (m-1)}} - \dfrac{1}{3^{2 \uparrow \uparrow m}} \right),$$
	then we have
	\begin{multline}
		S_1 = \sum\limits_{3 \leqslant k \leqslant \log^* x } \sum\limits_{m\geqslant 3} \left( \dfrac{1}{3}\times\dfrac{1}{2 \uparrow \uparrow m} \left( \dfrac{1}{3^{2 \uparrow \uparrow (m-1)}} - \dfrac{1}{3^{2 \uparrow \uparrow m}} \right)+R'_{k,m} \right)\\
		= \mathcal{C}_0 (\log^*x - 2) + \sum\limits_{k\geqslant 3} \sum\limits_{m \geqslant 3} R'_{k, m} + O\left(\sum\limits_{k\geqslant \log^* x+1}\sum\limits_{m \geqslant 3} \left|R'_{k, m}\right| \right), 
	\end{multline}
	where $$\mathcal{C}_0 = \dfrac{1}{3}\sum_{m=3}^{+\infty}\dfrac{1}{2 \uparrow \uparrow m} \left( \dfrac{1}{3^{2 \uparrow \uparrow (m-1)}} - \dfrac{1}{3^{2 \uparrow \uparrow m}} \right).$$
	From Lemma \ref{L10} we obtain
	\[
	R'_{k, m} \ll \dfrac{2\up\up k}{2 \uparrow \uparrow m}\left(\dfrac{1}{2^{2 \uparrow \uparrow k}}\dfrac{1}{3^{2 \uparrow \uparrow (m-1)}} + \dfrac{1}{2^{2 \uparrow \uparrow (m-1)}}\dfrac{1}{3^{2 \uparrow \uparrow (k-1)}}\right) \ll \dfrac{1}{(2 \uparrow \uparrow m)^2}\left(\dfrac{2}{3}\right)^{2\up\up(k-1)},
	\]
	so 
	\[
	\sum\limits_{k\geqslant \log^* x+1}\sum\limits_{m \geqslant 3} \left|R'_{k, m}\right| \ll \sum\limits_{k \geqslant \log^* x + 1}\left(\dfrac{2}{3} \right)^{2 \uparrow \uparrow (k-1)}  \ll \left(\dfrac{2}{3} \right)^{2 \uparrow \uparrow \log^* x} \ll \dfrac{1}{\sqrt{x}}
	\]
	and
	\begin{equation}\label{S1}
		S_1 = \mathcal{C}_0\log^{*}x + \mathcal{C}_1 + O\left(\dfrac{1}{\sqrt{x}}\right),
	\end{equation}
	where $$\mathcal{C}_1 = \sum_{k\geqslant 3}\sum_{m\geqslant 3}R'_{k,m}-2\mathcal{C}_0.$$
	For the sum $S_2$ we have
	\begin{multline}\label{S2}
		S_2 = \dfrac{1}{4} 	\sum\limits_{3 \leqslant k \leqslant \log^* x }\left( \dfrac{25}{486} + \left( (2 \uparrow \uparrow k) \mathcal{D}_{k, 2} - \dfrac{25}{486}\right)\right)\\
		=\dfrac{25}{1944} (\log^* x - 2) + \sum_{k=3}^{+\infty}\left( (2 \uparrow \uparrow k)\mathcal{D}_{k, 2} - \dfrac{25}{486}\right)+O \left( \sum\limits_{k \geqslant \log^*x + 1} \left(\dfrac{2}{3} \right)^{2 \uparrow \uparrow (k-1)} \right)\\
		= \dfrac{25}{1944}\log^* x + \mathcal{C}_2 + O \left(\dfrac{1}{\sqrt{x}}\right),
	\end{multline}
	where 
	$$\mathcal{C}_2 = -\dfrac{25}{972}+\sum_{k=3}^{+\infty}\left( (2 \uparrow \uparrow k) \mathcal{D}_{k, 2} - \dfrac{25}{486}\right).$$
	Similarly, for the sum $S_3$ we have
	\begin{multline}\label{S3}
		S_3 = \dfrac{1}{2} 	\sum\limits_{2 \leqslant k \leqslant \log^* x }\left(\dfrac{4}{9} + \left( (2 \uparrow \uparrow k ) \mathcal{D}_{k, 1} - \dfrac{4}{9}\right)\right)\\
		=  \dfrac{2}{9} (\log^*x -1) + \sum\limits_{k = 2}^{+\infty}\left( (2 \uparrow \uparrow k ) \mathcal{D}_{k, 1} - \dfrac{4}{9} \right) + O \left( \dfrac{1}{\sqrt{x}}\right)\\
		=\dfrac{2}{9} \log^* x +\mathcal{C}_3 + O \left( \dfrac{1}{\sqrt{x}}\right),
	\end{multline}
	where $$\mathcal{C}_3 = -\dfrac{2}{9} + \sum\limits_{k = 2}^{+\infty}\left( (2 \uparrow \uparrow k) \mathcal{D}_{k, 1} - \dfrac{4}{9} \right).$$
	Putting together the equalities \eqref{C4C5}, \eqref{S1}, \eqref{S2} and \eqref{S3}, we get
	\begin{equation}\label{S}
		S = c_2\log^*x + c_3 + O \left( \dfrac{1}{\sqrt{x}} \right),
	\end{equation}
	where $c_2$ is defined in the theorem, and the constant $c_3$ is defined by
	$$c_3 = \mathcal{C}_1+  \mathcal{C}_2+ \mathcal{C}_3+ \mathcal{C}_4+ \mathcal{C}_5.$$
	Substitute \eqref{S} into \eqref{L(x)}, then we get \eqref{LL(x)}, as required.
	
	Now we prove $L_1(x)\ll x$. To do this, we note that from \eqref{DD}, Lemma \ref{L2} and Lemma \ref{L9}, it follows that
	$$\mathcal{D}_{k,m}x + O\left(x^{5/8}(\log x)^2\right)\leqslant D_k x+O\left(x^{1/2}(\log x)^{5/4}\right).$$
	Dividing this inequality by $x$ and taking the limit as $x\to +\infty,$ we get
	$$\mathcal{D}_{k,m}\leqslant D_k.$$
	Using this estimate and Lemma \ref{L2} we obtain
	$$L_1(x) \leqslant 2\up\up \log^*x\sum_{m=1}^{+\infty}\dfrac{1}{2\up\up m}\left(D_{\log^*x}(x)+xD_{\log^* x}\right)\ll x.$$
Finally, let $x = (2\up\up N)-1$, then from  Lemma \ref{L9} we get
	$$L_1(x)\ll 2\up\up(N-1)\sum_{m=1}^{+\infty}\left( \dfrac{1}{2\up\up m}\times x^{5/8}(\log x)^2\right) \ll x^{5/8}(\log x)^3.$$
	Theorem \ref{T3} is proved.
	
	\section{Numerical Values of the Constants}
	In this section we justify the numerical values of the constants $c_0$ and $c_1$ given in the
	Theorems \ref{T1} and \ref{T2}.
	This table contains the numerical values of ${d}_m$$ \,(2\leqslant m\leqslant 4)$  calculated numerically
	\[
	\begin{tabular}{|c|c|c|c|c|c|c|c|c|c|c|c|c|}
		\hline 
		$m$  &$2$ &$3$ &$4$   \\ \hline 
		${d}_m$  & $0.62604418\ldots$ & $0.08229031\ldots$ & $0.00000764\ldots $   \\ \hline
	\end{tabular}
	\] 
	Since $d_1=1$ and
	$$\sum_{m = 5}^{+\infty} d_m\leqslant 2^{28}\sum_{m\geqslant 65\,536}\dfrac{1}{2^m}\leqslant 10^{-19\,000},$$
	we obtain $c_0 = \sum_{m\geqslant 1} d_m = 1.7083\ldots$
	The following table contains the numerical values of $\mathcal{D}_k$ ($1\leqslant k\leqslant 4$) calculated numerically
   \[
	\begin{tabular}{|c|c|c|c|c|c|c|c|c|c|c|c|c|}
		\hline 
		$k$ &$1$ &$2$ &$3$ &$4$   \\ \hline 
		${\mathcal{D}}_k$ & 0.607927\ldots & $ 0.347995\ldots$ & $0.044069\ldots$ & $0.00000764\ldots $   \\ \hline
	\end{tabular}
	\] 
	Since
	$$\sum_{k= 5}^{+\infty} {\left|(2\up\up k)\mathcal{D}_k-\dfrac{1}{2}\right|}\leqslant 3\sum_{k =  65\,536}^{+\infty}\left( \dfrac{2}{3}\right)^{k} \leqslant 10^{-10\,000},$$
	we obtain
	$$c_1 = \sum_{k=1}^{+\infty}\left((2\up\up k)\mathcal{D}_k-\dfrac{1}{2}\right) = 1.813\ldots$$

\end{document}